\newtheorem{thm}{Theorem}
\newtheorem{lemma}[thm]{Lemma}
\newtheorem{mydef}[thm]{Definition}
\newcommand{\SP}{\theta}
\newcommand{\ASP}{\text{off}}
\newcommand{\R}{\mathbb{R}}
\newcommand{\PONT}{\text{Align}}
\newcommand{\Pt}{\text{Al}}
\newcommand{\LL}{\mathcal{L}}
\newcommand{\SL}{\mathcal{SL}}
\title{Optimally Perturbed Identity Matrices of Rank 2}
\date{\today}
\author{Robi Bhattacharjee \\ \href{mailto:rcbhatta@eng.ucsd.edu}{rcbhatta@eng.ucsd.edu}}
\begin{document}

\maketitle

\abstract{
The problem of optimal antipodal codes can be framed as finding low rank Gram matrices $G$ with $G_{ii} = 1$ and $|G_{ij}| \leq \epsilon$ for $1 \leq i \neq j \leq n$. In 2018, Bukh and Cox introduced a new bounding technique by removing the condition that $G$ be a gram matrix. In this work, we investigate how tight this relaxation is, and find exact results for real valued matrices of rank $2$. 
}

\section{Introduction}

An antipodal code is a set of pairs of points $(x, -x)$ on the unit sphere in $\R^d$ in which the minimum distance between distinct points is maximized. We can identify each pair $(x, -x)$ with a single unit vector $u_x$. The distance between two unit vectors corresponds uniquely to their dot product. Therefore, we can consider the problem of finding $n$ unit vectors on $S^{d-1}$ such that no two have a large dot product by absolute value. This is generally studied by considering Gram matrices, giving the following optimization problem. 

\begin{mydef}\label{spdef}
Let $\SP(n,d)$ denote the minimum $\epsilon$ such that there exists an $n \times n$ Gram matrix $B$ of rank at most $d$ with $B_{ii} = 1$ for all $i$, and $|B_{ij}| \leq \epsilon$ for all $i \neq j$. 
\end{mydef}

While this problem has been most successfully studied through the linear programming bounds pioneered by Delsarte and Geothals \cite{del}, Bukh and Cox \cite{bukhCox} have found success by taking a fundamentally different approach. They ignore the constraint that $B$ be a Gram (equivalently symmetric positive semidefinite) matrix, and propose the following relaxation.

\begin{mydef}\label{aspdef}
Let $\ASP(n,d)$ denote the minimum $\epsilon$ such that there exists an $n \times n$ matrix $B$ of rank at most $d$ with $B_{ii} = 1$ for all $i$, and $|B_{ij}| \leq \epsilon$ for all $i \neq j$.
\end{mydef}

They then observe that $\ASP(n,d) \leq \SP(n,d)$, and proceed to find lower bounds of $\ASP(n,d)$. By finding equality cases with these bounds, they are able to exactly compute $\SP(n,d)$ for several previously unknown cases.

This suggests the natural question of understanding exactly how tight the inequality $\ASP(n,d) \leq \SP(n,d)$ is. In particular, are there any cases in which $\ASP(n,d) < \SP(n,d)$, or is this inequality always an equality? Answering this question would not only characterize the strength of the techniques used by Bukh and Cox, but would also be of great importance in computing $\ASP(n,d)$. Bounding $\ASP(n,d)$ has a number of applications including uses in coding theory, derandomization, and low dimensional embeddings (see \cite{alon} for example), and therefore investigating the relationship between $\ASP(n,d)$ and $\SP(n,d)$ could have benefits for understanding both of them. 

The goal of this paper is to make further steps towards understanding the relationship between these two problems. Our main results are as follows.

First, the central parameter used by Bukh and Cox to bound $\ASP(n,d)$ is the following. 

\begin{mydef}
\cite{bukhCox} Let $\mu$ be a nonzero probability mass on $\R^k$ and define $$\LL(\mu) = \inf_{y \in \text{supp}(\mu)\setminus \{0\}} \inf_{v \in \R^k \setminus \{0\}} \frac{\mathbb{E}_{x \sim \mu} [|\langle v,x\rangle|}{|\langle v,y\rangle|}.$$ Let $\mathcal{P}(n, k)$ be the collection of all probability masses $\mu$ for which there is a multiset $X$ of $n$ vectors over $\R^k$ that span $\R^k$ and where $\mu$ is the uniform distribution over $X$. Then $$\mathcal{SL}(n, k) = \sup_{\mu \in \mathcal{P}(n,k)} \LL(\mu).$$
\end{mydef}

They then show that $$\ASP(n,d) \geq \frac{1}{n\SL(n,n-d) - 1},$$ and subsequently focus on finding upper bounds of $\SL(n, n-d)$. We tighten this inequality to an equality.

\begin{thm}\label{thm_equal_improv}
For all $n, d$, $$\ASP(n,d) = \frac{1}{n\SL(n,n-d) - 1}.$$
\end{thm}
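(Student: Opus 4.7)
The plan is to prove the upper bound $\ASP(n,d) \leq \frac{1}{n\SL(n,n-d) - 1}$; the reverse is already shown in \cite{bukhCox}. I will show that every $\mu \in \mathcal{P}(n, n-d)$ with $\LL(\mu) > 1/n$ gives rise to a rank-$d$ matrix $B$ with $B_{ii} = 1$ and $|B_{ij}| \leq \frac{1}{n\LL(\mu) - 1}$; since $\frac{1}{nL-1}$ decreases in $L$, the supremum over $\mu$ then yields the desired inequality. (The degenerate case $\SL(n,n-d) = 1/n$ makes the target bound vacuous.)

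Fix such a $\mu$, uniform on $\{x_1, \ldots, x_n\} \subset \R^{n-d}$ spanning $\R^{n-d}$, and let $U^\perp := \{b \in \R^n : \sum_i b_i x_i = 0\}$, a subspace of dimension $d$. I would construct $B$ column by column: for each $j$, choose the $j$-th column $b^{(j)}$ to be a vector in $U^\perp$ with $b^{(j)}_j = 1$ and $\max_{i \neq j}|b^{(j)}_i|$ as small as possible. Every column of $B$ then lies in $U^\perp$, so $\mrm{rank}(B) \leq d$; the diagonal condition $B_{jj} = 1$ is automatic, so everything reduces to bounding the off-diagonal entries.

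For each $j$, this is a linear program: writing $b^{(j)} = e_j + c$ with $c_j = 0$, minimize $\|c\|_\infty$ subject to $\sum_{i \neq j} c_i x_i = -x_j$. A standard LP dualization (decompose $c = c^+ - c^-$ and add a slack for the infinity-norm bound) shows that the optimal value equals
$$R_j \;=\; \sup_{v \in \R^{n-d}} \frac{|\langle v, x_j \rangle|}{\sum_{i \neq j}|\langle v, x_i \rangle|}.$$
Writing $u_i = \langle v, x_i \rangle$, the inequality $R_j \leq \epsilon$ rearranges to $|u_j| \leq \frac{\epsilon}{1+\epsilon}\|u\|_1$, and the uniform bound $\max_j R_j \leq \epsilon$ becomes $\|u\|_\infty \leq \frac{\epsilon}{1+\epsilon}\|u\|_1$ for every $v \in \R^{n-d}$. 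The definition of $\LL(\mu)$ is precisely the sharp inequality $\|u\|_\infty \leq \frac{1}{n\LL(\mu)}\|u\|_1$ on the same set of $u$'s, so choosing $\epsilon = \frac{1}{n\LL(\mu) - 1}$ (which solves $\frac{\epsilon}{1+\epsilon} = \frac{1}{n\LL(\mu)}$) makes all the $R_j \leq \epsilon$, and LP strong duality delivers the desired $b^{(j)}$.

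The main obstacle is the duality calculation together with the observation that the dual ratio $\frac{|u_j|}{\|u\|_1 - |u_j|}$ and the $\LL(\mu)$-ratio $\frac{|u_j|}{\|u\|_1}$ are related by the monotone bijection $r \mapsto r/(1-r)$, which sends $\frac{1}{n\LL(\mu)}$ precisely to $\frac{1}{n\LL(\mu)-1}$. A small loose end is primal feasibility of the LP, which requires $x_j \in \mrm{span}\{x_i : i \neq j\}$ for every $j$; but if this failed, a linear functional $v$ vanishing on all $x_i$ with $i \neq j$ while not vanishing on $x_j$ would give $\|u\|_1/(n\|u\|_\infty) = 1/n$, forcing $\LL(\mu) \leq 1/n$ and contradicting our assumption.
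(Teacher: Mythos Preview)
Your argument is correct and is essentially the paper's proof without the ``alignment'' vocabulary: your subspace $U^\perp$ is the paper's $A^\perp$, your dual ratio $R_j$ is exactly $\Pt_j(A)$, your LP strong duality is the finite-dimensional Hahn--Banach step (Lemma~\ref{holder}) behind Theorem~\ref{duality}, and your monotone bijection $r\mapsto r/(1-r)$ is the content of Theorem~\ref{thm_def_equiv}. The only cosmetic difference is that you build $B$ column-by-column inside a $d$-dimensional subspace whereas the paper builds it row-by-row, which is immaterial since the constraints on $B$ are transpose-invariant.
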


 Our other main result is about matrices of rank $2$.

\begin{thm}\label{thm_dim_2}
For all $n$, $\ASP(n, 2) = \SP(n,2) = \cos \frac{\pi}{n}$. 
\end{thm}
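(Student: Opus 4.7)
I would handle the two inequalities $\ASP(n,2) \leq \cos(\pi/n)$ and $\ASP(n,2) \geq \cos(\pi/n)$ separately; the upper bound additionally gives $\SP(n,2) \leq \cos(\pi/n)$, and together with $\ASP \leq \SP$ these close out all three equalities. The upper bound is by explicit construction: the $n$ unit vectors $w_k = (\cos(k\pi/n),\sin(k\pi/n)) \in \R^2$ have Gram matrix of rank $2$, unit diagonal, and maximum off-diagonal entry $|\cos((i-j)\pi/n)| \leq \cos(\pi/n)$.

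For the lower bound I would apply Theorem~\ref{thm_equal_improv} at $d = 2$, reducing $\ASP(n,2) \geq \cos(\pi/n)$ to $\SL(n,n-2) \leq (1+\sec(\pi/n))/n$. An element $\mu \in \mathcal{P}(n,n-2)$ corresponds to a $2$-dimensional subspace $K \subset \R^n$ (the kernel of linear dependences among the $x_i$'s), or equivalently to a list $p_1,\ldots,p_n \in \R^2$ of $n$ nonzero coordinate vectors of $K$. A standard $\ell_1$-LP duality calculation gives
\[
\LL(\mu) \;=\; \frac{1}{n}\Bigl(1 + \min_{j} g_j(K)\Bigr), \qquad g_j(K) \;=\; \sup_{k \in K \setminus \{0\}} \frac{|k_j|}{\max_{i \neq j} |k_i|},
\]
and by support-function duality $g_j(K) \leq C$ is equivalent to $p_j \in C \cdot \mathrm{conv}(\pm p_i : i \neq j)$. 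The target therefore reduces to the two-dimensional claim: for every configuration $p_1,\ldots,p_n$, some $p_j$ lies in $\sec(\pi/n) \cdot \mathrm{conv}(\pm p_i : i \neq j)$.

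I would argue this by a dichotomy on the centrally symmetric polytope $Q = \mathrm{conv}(\pm p_1,\ldots,\pm p_n)$. If some $p_j$ is not a vertex of $Q$ it already lies in $\mathrm{conv}(\pm p_i : i \neq j)$ and we are done. Otherwise $Q$ is a centrally symmetric $2n$-gon with cyclic vertices $v_1, \ldots, v_{2n}$ and angular gaps $\alpha_k$ summing to $2\pi$; writing $v_k = \lambda_k v_{k-1} + \mu_k v_{k+1}$ gives $g_{i(k)}(K) = \lambda_k + \mu_k$ where $v_k = \pm p_{i(k)}$. Since $\sum_k (\alpha_k + \alpha_{k+1}) = 4\pi$, pigeonhole yields some $k$ with $\alpha_k + \alpha_{k+1} \leq 2\pi/n$; when the three relevant vertices have equal magnitude the trigonometric identity $\lambda_k + \mu_k = \cos((\alpha_k - \alpha_{k+1})/2)/\cos((\alpha_k + \alpha_{k+1})/2) \leq \sec(\pi/n)$ closes the argument.

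The main obstacle is handling vertices of unequal magnitude: the identity picks up a weight of the form $r_k(\sin\alpha_{k+1}/r_{k-1} + \sin\alpha_k/r_{k+1})/\sin(\alpha_k + \alpha_{k+1})$ with $r_i = |v_i|$, and the pigeonhole bound on $\alpha_k + \alpha_{k+1}$ alone does not yet deliver $\sec(\pi/n)$. I expect to close this either by invoking the $GL_2(\R)$-invariance of each $g_j(K)$ to put $Q$ into a convenient affine normal form before pigeonholing, or by a variational argument showing that $\min_k g_k(Q)$ is maximized over centrally symmetric $2n$-gons precisely by the affinely regular ones, where the clean identity applies.
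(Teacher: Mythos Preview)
Your approach is genuinely different from the paper's. The paper never bounds $\ASP(n,2)$ directly; it takes an optimal rank-$2$ matrix $Q$, develops structural lemmas about the associated tridiagonal matrix $P$ of optimal alignment vectors, uses first-order optimality under general perturbations together with a Farkas-type argument to produce a positive diagonal $\Lambda$ with $Q^tP\Lambda=P\Lambda Q^t=0$, shows $P\Lambda$ is symmetric of rank $n-2$, and concludes that $(Q+Q^t)/2$ is positive semidefinite of rank $2$ with the same off-diagonal bound. This yields $\ASP(n,2)=\SP(n,2)$, after which the value $\cos(\pi/n)$ is simply quoted from the classical literature on $\SP(n,2)$.

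Your reduction to the planar inequality ``for every $p_1,\dots,p_n\in\R^2\setminus\{0\}$, some $p_j$ lies in $\sec(\pi/n)\cdot\mathrm{conv}(\pm p_i:i\neq j)$'' is correct and coincides exactly with the paper's $\alpha_i(S)$ formalism. But the gap you yourself flag is real, and neither proposed fix closes it. The quantities $g_j$ are indeed $GL_2(\R)$-invariant, yet a linear map has only four real parameters and cannot equalize $n$ radii; worse, the gaps $\alpha_k$ you pigeonhole on are \emph{not} $GL_2$-invariant, so ``normalize first, then pigeonhole on angles'' is not a coherent strategy. As for the variational route: establishing that $\min_k g_k$ is maximized precisely at the affinely regular $2n$-gon is the entire content of the lower bound. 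The scaling perturbation that forces all $g_k$ to be equal at an optimum is only the paper's Lemma~\ref{uniform}; the paper then needs the full first-order analysis under arbitrary matrix perturbations (Lemmas~\ref{opbounds}--\ref{symmetric}) to extract enough rigidity to identify the extremal value. So you have correctly isolated the target planar inequality but not proved it, and completing your variational program would require machinery of weight comparable to the paper's symmetrization argument.
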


 Previously known cases of equality include all $n, d$ such that $$\ASP(n,d) = \SP(n,d) = \sqrt{\frac{n-d}{ d(n-1)}}.$$ This case corresponds to the Welch bound \cite{welch}. Bukh and Cox showed that for all $n, d$ with $n-d \in \{1, 2, 3, 7, 23\}$, $\ASP(n,d) = \SP(n,d)$. 
 
 What these cases have in common is that they all result from some construction of a symmetric positive definite matrix meeting a lower bound of $\ASP(n,d)$. In particular, the Welch bound can be generalized to apply to bound $\ASP(n,d)$, and the techniques employed by Bukh and Cox specifically bound $\ASP(n,d)$. Simply put, all previously known cases of equality occurred when $\ASP(n,d)$ was \textit{used} to bound $\SP(n,d)$. 
 
This is in contrast to the case $d=2$ which is presented in this paper, as bounding and computing $\SP(n, 2)$ is quite trivial whereas bounding $\ASP(n,2)$ is considerably more challenging.  Because of this, we believe that our result suggests a deeper relationship between $\ASP(n,d)$ and $\SP(n,d)$.

In our proof, we use an indirect method where we show how to ``symmetrize" any"locally optimal" matrix of rank $2$ while preserving the minimal entry $|M_{ij}|$. We leave open the conjecture that $\ASP(n,d) = \SP(n,d)$ for all $n,d$, and hope that our techniques for $d=2$ lead to further headway on this problem. 

\section{Acknowledgments}
We thank Henry Cohn for his insightful advice and overall supervision of this project. We also thank Anson Kahng and Yuchen Fu for their helpful comments and suggestions in writing this paper. Finally, we thank the anonymous referee for their suggestions on connecting this work to the existing literature as well as their suggestions for improving the presentation of this paper.

\section{Preliminaries}

We begin by presenting an alternative definition of $\SL(n,k)$. Broadly speaking, $\SL(n,k)$ is defined with respect to distributions of $n$ points over $\R^k$. We will instead consider subspaces of rank $k$ in $\R^n$. To do so, we introduce a quantity called ``alignment".

\subsection{Alignment}

\begin{mydef}
For any $a \in \R^n \backslash \{0\}$ we define $$\Pt_i(a) = \frac{|a_i|}{\sum_{j \neq i} |a_j|}.$$ In the case that $\sum_{j \neq i} |a_j| = 0$, we define $\Pt_i(a) = \infty$. Here $\Pt_i$ stands for the alignment in the $i$th index.
\end{mydef}

Clearly, $\Pt_i(a)$ is highly dependent on which basis $a$ is expressed in. Therefore, we always assume $\R^n$ to have a fixed orthonormal basis $e_1, e_2, \dots, e_n$ for which $\Pt_i$ is defined. We define the alignment of a space to just be the maximum alignment of any vector in it. 

\begin{mydef}
Given a subspace $A \subset \R^n$, we let $$\Pt_i(A) = \max_{a \in A \setminus \{0\}} \Pt_i(a).$$ The alignment of $A$ (denoted $\Pt(A)$) is defined as $$\Pt(A) = \max_{i \in \{1, 2, \ldots, n\}} \Pt_i(A).$$ Finally, we let $\PONT(n, k)$ denote the minimum value of $\Pt(A)$ over all subspaces of dimension at least $n-d$. 
\end{mydef}

Note that the alignment of a space is well defined because $\Pt_i(a)$ is a scale invariant function, and because the set of unit vectors forms a compact set. 

The relationship between alignment and $\SL(n,k)$ is as follows. Let $\mu$ be a uniform distribution over $n$ non-zero vectors, $x_1, x_2, \dots, x_n \in \R^k$. For any $v \in \R^k$ let $v_x = (\langle v, x_1 \rangle, \langle v, x_2 \rangle, \dots, \langle v, x_n \rangle).$ Then for any $v \in \R^k$, and $1 \leq i \leq n$, we have
\begin{equation*}
\begin{split}
\frac{\mathbb{E}_{x \sim \mu} [|\langle v,x\rangle|}{|\langle v,x_i\rangle|} &= \frac{\sum_{j=1}^n |\langle v, x_j \rangle |}{n |\langle v, x_i \rangle|} \\
&= \frac{1}{n} + \frac{\sum_{j \neq i} |\langle v, x_j \rangle |}{n |\langle v, x_i \rangle|} \\
&= \frac{1 + \frac{1}{\Pt_i(v_x)}}{n}.
\end{split}
\end{equation*}

Building from this observation, we have the following.

\begin{thm}\label{thm_def_equiv}
For all $n \geq k > 0$, $$\frac{1 + \frac{1}{\PONT(n, k)}}{n} = \SL(n, k).$$
\end{thm}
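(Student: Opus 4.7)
The plan is to treat the calculation already displayed in the excerpt as most of the proof, and package it as an equality of two optima over the same geometric object, namely the set of $k$-dimensional subspaces of $\R^n$.

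First, I would formalize the correspondence (up to change of basis in $\R^k$) between $\mathcal{P}(n,k)$ and $k$-dimensional subspaces of $\R^n$. Given $\mu$ uniform on a spanning multiset $X = \{x_1, \dots, x_n\} \subset \R^k$, the linear map $v \mapsto v_x = (\langle v, x_1\rangle, \dots, \langle v, x_n\rangle)$ from $\R^k$ into $\R^n$ is injective because the $x_i$ span, so its image $A_\mu$ has dimension exactly $k$. Conversely, given a $k$-dimensional $A \subset \R^n$, any $n \times k$ matrix $B$ whose columns form a basis of $A$ has rank $k$, hence its rows are vectors in $\R^k$ that span $\R^k$ and realize $A$ as $A_\mu$ for the corresponding $\mu \in \mathcal{P}(n,k)$. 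So $\mu \mapsto A_\mu$ is surjective onto $k$-dimensional subspaces.

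Second, the excerpt already establishes
$$\frac{\mathbb{E}_{x \sim \mu}[|\langle v, x\rangle|]}{|\langle v, x_i\rangle|} \;=\; \frac{1 + 1/\Pt_i(v_x)}{n}$$
whenever $\langle v, x_i\rangle \neq 0$. Since $t \mapsto (1 + 1/t)/n$ is strictly decreasing on $(0,\infty]$, the infima defining $\LL(\mu)$ convert to suprema of $\Pt_i$, so
$$\LL(\mu) \;=\; \inf_{i,\,v}\, \frac{1 + 1/\Pt_i(v_x)}{n} \;=\; \frac{1 + 1/\sup_{i,\,v}\Pt_i(v_x)}{n} \;=\; \frac{1 + 1/\Pt(A_\mu)}{n}.$$
Taking $\sup$ over $\mu$ and using the surjection $\mu \mapsto A_\mu$ from Step 1,
$$\SL(n,k) \;=\; \sup_{\dim A = k} \frac{1 + 1/\Pt(A)}{n} \;=\; \frac{1 + 1/\inf_{\dim A = k} \Pt(A)}{n} \;=\; \frac{1 + 1/\PONT(n,k)}{n},$$
which is the claimed identity.

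I do not expect any serious obstacle here, as the argument is essentially a change of variables. The one bit of care needed is reconciling the restriction $y \in \mathrm{supp}(\mu) \setminus \{0\}$ with the index set over which $\Pt(A_\mu)$ is defined: if $x_i = 0$ then $a_i = 0$ for every $a \in A_\mu$, so $\Pt_i(A_\mu) = 0$ and this index never realizes the supremum defining $\Pt(A_\mu)$, making its omission harmless. Symmetrically, the degenerate case $\Pt_i(A_\mu) = \infty$ (which occurs precisely when $e_i \in A_\mu$) is consistent with $\LL(\mu) = 1/n$ under the formula above, so no auxiliary cases need to be split off.
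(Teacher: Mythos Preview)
Your proposal is correct and follows essentially the same approach as the paper: both set up the surjection from $\mu \in \mathcal{P}(n,k)$ to $k$-dimensional subspaces $A_\mu \subset \R^n$ via $v \mapsto (\langle v, x_1\rangle,\dots,\langle v,x_n\rangle)$, invoke the displayed identity to get $\LL(\mu) = (1 + 1/\Pt(A_\mu))/n$, and then optimize over $\mu$ (equivalently over $A$). Your treatment is in fact slightly more careful than the paper's in making the monotonicity step explicit and in dispatching the degenerate cases $x_i = 0$ and $\Pt_i(A_\mu)=\infty$.
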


\begin{proof}
Consider any $\mu \in \mathcal{P}(n,k)$, and let it be the uniform distribution over $\{x_i| 1 \leq i \leq n\} \subset \R^k$. Fix an orthonormal basis $e_1, e_2, \dots e_n$ of $\R^n$, and let $T:\R^k \to \R^n$ be the map $$T(v) = \sum \langle v, x_i \rangle e_i.$$ For any $v \in \R^k \setminus \{0\}$, we have 
\begin{equation*}
\begin{split}
\frac{\mathbb{E}_{x \sim \mu} [|\langle v,x\rangle|}{|\langle v,x_i\rangle|} &= \frac{\sum_{j=1}^n |\langle v, x_j \rangle |}{n |\langle v, x_i \rangle|} \\
&= \frac{1}{n} + \frac{\sum_{j \neq i} |\langle  v, x_j \rangle |}{n |\langle v, x_i \rangle|} \\
&= \frac{1 + \frac{1}{\Pt_i(T(v))}}{n}.
\end{split}
\end{equation*}
Let $A$ be the image of $T$. It follows that $$\LL(\mu) = \inf_{v \in \R^k} \frac{\mathbb{E}_{x \sim \mu} [|\langle v,x\rangle|}{|\langle v,x_i\rangle|} = \frac{1 + \frac{1}{\Pt(A)}}{n} \leq \frac{1 + \frac{1}{\PONT(n, k)}}{n}.$$ Because $x_i$ spans $\R^k$, we can also verify that $A$ has dimension $k$. 

To finish the proof, it suffices to show that some corresponding distribution $\mu$ exists for every $A \subset \R^n$ of rank $k$. To do this, simply pick $k$ vectors $a_1, a_2, \dots, a_k$ that span $A$. Let $\mu$ be the uniform distribution over the $n$ rows of $[a_1, a_2, \dots, a_k]$.  Then $$\SL(\mu) \geq \LL(\mu) = \frac{1 + \frac{1}{\Pt(A)}}{n}.$$ This implies the result. 
\end{proof}

Next, we further characterize the relationship between $\ASP(n,d)$ and alignment with the following theorem. 

\begin{thm}\label{duality}
Let $A \subset \R^n$ be a vector space with dimension $n-d$ such that $\Pt_i(A)$ is finite for all $i$. Then there exists a matrix $G$ with the following properties.
\begin{enumerate}
	\item $G$ has rank at most $d$;
	\item $G_{ii} = 1$ for $1 \leq i \leq n$;
	\item $|G_{ij}| \leq \max_{a \in A} \Pt_i(a)$ for $i \neq j$;
	\item $Ga = 0$ for all $a \in A$. 
\end{enumerate}
\end{thm}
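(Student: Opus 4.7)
The key observation is that the condition $Ga=0$ for all $a\in A$ is equivalent to requiring every row of $G$ to lie in $A^{\perp}$. Since $\dim A = n-d$ gives $\dim A^{\perp}=d$, any matrix whose rows lie in $A^{\perp}$ automatically has rank at most $d$, taking care of conditions (1) and (4). Thus the theorem reduces to constructing, for each $i$, a vector $g_i \in A^{\perp}$ with $(g_i)_i = 1$ and $|(g_i)_j| \le \alpha_i$ for $j\neq i$, where I write $\alpha_i := \Pt_i(A)$. The rows can then be assembled into $G$ independently.

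The plan is to find each row via linear programming duality. Fix $i$ and consider
\[
\beta_i \;:=\; \min \{\,t \;:\; g \in A^{\perp},\; g_i = 1,\; |g_j| \le t \text{ for all } j\neq i\,\}.
\]
The hypothesis that $\Pt_i(A)$ is finite guarantees $e_i \notin A$ (otherwise $\Pt_i(e_i) = \infty$), so this LP is feasible. Writing down the dual and eliminating the standard sign-split slack variables, one finds
\[
\beta_i \;=\; \max \{\,a_i \;:\; a \in A,\; \textstyle\sum_{j\neq i}|a_j| \le 1 \,\}.
\]
Since $A$ is a subspace (closed under negation), this is the homogenized version of $\sup_{a\in A,\, a\neq 0} |a_i|/\sum_{j\neq i}|a_j|$, which is exactly $\Pt_i(A) = \alpha_i$. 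A compactness argument on the unit sphere of $A$, together with the finiteness hypothesis, shows the supremum is attained, so $\beta_i = \alpha_i$ and a primal optimizer gives the desired row $g_i$.

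The main obstacle I expect is making the duality step completely rigorous: verifying strong duality for this LP and handling the edge case $\alpha_i = 0$, in which every $a\in A$ has $a_i = 0$, so $e_i \in A^{\perp}$ and we may simply set $g_i = e_i$. Conceptually the argument is a clean hyperplane separation: the ``alignment'' $\Pt_i(A)$ is precisely the support-function quantity that measures how tightly a vector in $A^{\perp}$ can be made to concentrate on coordinate $i$, which is exactly what condition (3) requires.
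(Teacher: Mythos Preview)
Your proposal is correct and follows essentially the same approach as the paper: both reduce to constructing each row $g_i \in A^\perp$ separately, and both obtain the bound $|(g_i)_j|\le \Pt_i(A)$ via a duality argument. The only cosmetic difference is that the paper phrases the duality step as a finite-dimensional Hahn--Banach extension (extending the functional $\rho(a)\mapsto a_i$ on the image of $A$ in $\R^{n-1}$, dominated by $\Pt_i(A)\cdot\|\cdot\|_1$), while you phrase it as LP strong duality; in this $\ell^1/\ell^\infty$ setting these are the same argument.
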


\begin{proof}

Our strategy is to construct $v_1, v_2, \dots, v_n$ such that for all $1 \leq i \leq n$ the following three conditions hold. $v_i$ is orthogonal to all $a \in A$, $(v_i)_i = 1$, and $|(v_i)_j| \leq \Pt_i(A)$ for $i \neq j$. The result then follows from taking $G$ to be the matrix with rows $v_i$. Without loss of generality, set $i =1$.

Let $\rho: A \to \R^{n-1}$ be the map sending the first coordinate of any vector to 0. Consider $a = (a_1, a_2, \dots, a_n) \in A$. If $\rho(a) = 0$, then $a_i = 0$ for $i > 1$. Since $\Pt_1(A)$ is finite, it follows that $a_1 = 0$ as well. Therefore, $\rho$ has a trivial kernel. 

As a result, we can define a linear map, $f: \rho(A) \to \R$ as $f(\rho(a)) = a_1$ where $\rho(A)$ denotes the image of $A$ under $\rho$. We now use a lemma based on the Hahn Banach theorem (a proof can be found in the appendix).

\begin{lemma}\label{holder}
Let $V \subset \R^m$ be a subspace and $f: V \to \R$ be a linear map. Fix some orthonormal basis of $\R^m$ so that all $v \in V$ can be represented with coordinates $v_1, v_2, ..., v_m$. Let $\epsilon$ be the smallest positive real such that for all $v \in V$ $$f(v) \leq \epsilon \sum |v_i|.$$ Then there exists $u \in \R^m$ such that $|u_i| \leq \epsilon$ for all $i$ and such that for all $v \in V$, $f(v) = \langle u, v \rangle.$
\end{lemma}

Taking $m = n-1$ and $V = \rho(A)$, Lemma \ref{holder} yields a vector $r \in \R^{n-1}$ such that $f(\rho(a)) = \langle r, \rho(a) \rangle$. Let $r' \in \R^n$ denote the unique vector such that $r'_1 = 0$ and $\rho(r') = r$. Then for any $a \in A$,
\begin{equation*}
\begin{split}
\langle e_1, a \rangle &= a_1 \\
&= f(\rho(a)) \\
&= \langle r, \rho(a) \rangle \\
&= \langle r', a \rangle. 
\end{split}
\end{equation*}
Therefore, we have $\langle a, e_1 - r' \rangle = 0.$ By construction, $r$ has all elements of absolute value at most $\epsilon$. Therefore taking $v_1 = e_1 - r'$ suffices. An analogous process can be used to find vectors $v_2, v_3, \dots, v_n$ which complete the proof.
\end{proof}

This theorem implies that $\ASP(n,d) \leq \PONT(n,n-d)$. This gives a proof of Theorem \ref{thm_equal_improv}.

\begin{proof}
(Of Theorem \ref{thm_equal_improv}) As shown by Bukh and Cox, $\ASP(n,d) \geq \frac{1}{n\SL(n,n-d) - 1},$  and as shown above, $\ASP(n,d) \leq \PONT(n, n-d)$. Therefore, by Theorem \ref{thm_def_equiv}, we must have equality between all three quantities.
\end{proof}

\subsection{An alternative formulation}

Recall that $\SP(n,d)$ is the smallest real number $\epsilon$ such that there exist $n$ vectors in $\R^d$ with all pairwise dot products having absolute value at most $\epsilon$. We now present a similar characterization of $\ASP(n,d)$, which will prove useful for proving Theorem \ref{thm_dim_2}.

For any finite set of vectors $S = \{s_1, s_2, \dots, s_n\} \subset \R^d \setminus \{0\}$, let $$H(S) = \{ \sum_1^n \lambda_is_i; \sum |\lambda_i| \leq 1\}.$$ In other words, $H(S)$ is the convex region spanned by all $s_i \in S$ as well as their negatives. Next, let $S_i = S \setminus \{s_i\}$. Then we define the parameter $\alpha_i$ as $$\alpha_i(S) = \max_{rs_i \in H(S_i)} r.$$ $\alpha_i(S)$ can be thought of as the fraction of $s_i$ that is contained in $H(S_i)$. 

\begin{thm}\label{geopont}
For any $S \subset \R^d \setminus \{0\}$ with $|S| = n$, let $A \subset \R^n$ denote the space of all $a$ such that $\sum a_is_i = 0$. Then $\alpha_i(S) = \Pt_i(A)$. 
\end{thm}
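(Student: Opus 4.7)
The plan is to prove both inequalities $\Pt_i(A) \leq \alpha_i(S)$ and $\Pt_i(A) \geq \alpha_i(S)$ separately, via a natural correspondence between nonzero $a \in A$ with $a_i \neq 0$ and representations $rs_i = \sum_{j \neq i}\lambda_j s_j$ with $\sum_{j\neq i}|\lambda_j|\leq 1$. The bridge between the two pictures is simply the relation $\sum_j a_j s_j = 0$, rewritten as $a_i s_i = -\sum_{j\neq i}a_j s_j$.

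First I would dispose of degenerate cases. If $s_i \notin \mathrm{span}(S_i)$, then every $a \in A$ has $a_i = 0$ and no positive multiple of $s_i$ lies in $H(S_i)$, so both quantities equal $0$. Otherwise both are strictly positive, and the suprema in their definitions are attained by compactness of the unit sphere in $\R^n$ and of $H(S_i) \subset \R^d$, respectively.

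For $\Pt_i(A) \leq \alpha_i(S)$, I would take any $a \in A$ with $a_i \neq 0$. Since $s_i \neq 0$, the equation $a_i s_i = -\sum_{j\neq i} a_j s_j$ forces $\sum_{j\neq i}|a_j| > 0$, so I may divide both sides by $\sum_{j\neq i}|a_j|$ to obtain $\pm\Pt_i(a)\cdot s_i = \sum_{j\neq i}\mu_j s_j$ with $\sum_{j\neq i}|\mu_j|=1$. Thus $\pm\Pt_i(a)\cdot s_i \in H(S_i)$, and since $H(S_i)$ is symmetric about the origin, $\Pt_i(a)\cdot s_i \in H(S_i)$ too, yielding $\Pt_i(a) \leq \alpha_i(S)$.

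For the reverse inequality, I would take an extremal representation $rs_i = \sum_{j\neq i}\lambda_j s_j$ with $r = \alpha_i(S)$ and $\sum_{j\neq i}|\lambda_j| \leq 1$, then define $a \in \R^n$ by $a_i = r$ and $a_j = -\lambda_j$ for $j \neq i$. Checking $\sum_j a_j s_j = 0$ gives $a \in A$, and $\Pt_i(a) = r/\sum_{j\neq i}|\lambda_j| \geq r$. The argument is essentially algebraic; the only subtle points are invoking the central symmetry of $H(S_i)$ in the first direction and verifying that the denominator $\sum_{j\neq i}|a_j|$ is nonzero, which follows cleanly from the assumption $s_i \neq 0$. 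I do not expect any serious obstacle beyond these bookkeeping remarks.
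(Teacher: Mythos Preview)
Your proposal is correct and follows essentially the same two-inequality argument as the paper: rewrite $\sum_j a_j s_j=0$ as $a_i s_i=-\sum_{j\neq i}a_j s_j$, normalize by $\sum_{j\neq i}|a_j|$ to land in $H(S_i)$ for one direction, and build an $a\in A$ from an optimal $\lambda$ for the other. Your treatment is in fact a bit more careful than the paper's---you explicitly handle the degenerate case $s_i\notin\mathrm{span}(S_i)$, invoke the central symmetry of $H(S_i)$ to absorb the sign, and allow $\sum_{j\neq i}|\lambda_j|\le 1$ rather than $=1$---but the underlying idea is identical.
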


\begin{proof}
We first show that $\Pt_i(A) \leq \alpha_i(S)$. Fix $a \in A$, and let $\beta$ denote $\Pt_i(a)$. Then $$\beta= \frac{|a_i|}{\sum_{j \neq i} |a_j|}.$$ By the definition of $A$, we also have that $a_is_i = -\sum_{j \neq i}a_js_j.$ Dividing both sides by $\sum_{j \neq i} |a_j|$, we see that $$\beta s_i = \sum_{j \neq i} \frac{a_j}{\sum_{j \neq i} |a_j|} s_j,$$ which implies that $\beta w_i \in H(S_i)$. Since $a$ was arbitrary, it follows that $\Pt_i(A) \leq \alpha_i$.

Next, we show that $\Pt_i(A) \geq \alpha_i(S)$. By the definition of $\alpha_i$, there exists some $\lambda$ such that $\sum_{j \neq i} |\lambda_j| = 1$ and $\alpha_i(S) s_i = \sum_{j \neq i} \lambda_j s_j.$ Let $a_i = \alpha_i(S)$, and $a_j = -\lambda_j$ for $i \neq j$. Then $a = (a_1, a_2, \dots a_n)$ is an element of $A$ that satisfies $\Pt_i(a) = \alpha_i$, thus $\Pt_i(A) \geq \alpha_i$. 
\end{proof}

\begin{thm}\label{geopontcor}
Let $S_{n,d}$ denote all sets of $n$ non-zero vectors in $\R^d$. Then $$\ASP(n,d) = \min_{S \in S_{n,d}} \max_{1 \leq i \leq n} \alpha_i(S).$$
\end{thm}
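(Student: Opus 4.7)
The plan is to reduce the right-hand side to the alignment invariant $\PONT(n, n-d)$, which Theorem \ref{thm_equal_improv} already equates with $\ASP(n,d)$. For any $S = \{s_1, \ldots, s_n\} \in S_{n,d}$, I associate the subspace $A_S = \{a \in \R^n : \sum_i a_i s_i = 0\}$. Applying Theorem \ref{geopont} coordinate-by-coordinate gives $\max_i \alpha_i(S) = \max_i \Pt_i(A_S) = \Pt(A_S)$, so the statement reduces to showing $\min_{S \in S_{n,d}} \Pt(A_S) = \PONT(n, n-d)$.

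The first direction, $\min_S \Pt(A_S) \geq \PONT(n, n-d)$, will be immediate: $A_S$ is the kernel of the linear map $\R^n \to \R^d$ sending $e_i \mapsto s_i$, so $\dim A_S \geq n - d$, and $A_S$ is therefore a legitimate competitor in the minimization defining $\PONT(n, n-d)$. For the reverse direction, I would construct, for any subspace $A \subset \R^n$ with $\dim A \geq n-d$ and $\Pt(A) < \infty$, a corresponding $S \in S_{n,d}$ with $A_S = A$. Setting $d' = n - \dim A \leq d$, I pick any basis $b_1, \ldots, b_{d'}$ of $A^\perp$ and define $s_i \in \R^d$ to have coordinates $((b_1)_i, \ldots, (b_{d'})_i, 0, \ldots, 0)$. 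A direct check then shows that $\sum_i a_i s_i = 0$ iff $\langle a, b_j \rangle = 0$ for all $j$ iff $a \in A$, so $A_S = A$ and $\max_i \alpha_i(S) = \Pt(A)$.

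The main subtlety will be verifying that $s_i \neq 0$ for every $i$, as required by the definition of $S_{n,d}$. If $s_i$ were zero, then $e_i$ would be orthogonal to every element of $A^\perp$, placing $e_i \in A$ and forcing $\Pt_i(A) = \infty$, contrary to the standing assumption. This finiteness assumption is harmless: if $\PONT(n, n-d)$ is infinite the target equality is vacuous, and otherwise the minimum is attained (or at least approached) by subspaces of finite alignment, each yielding a valid $S$. Combining the two directions with Theorem \ref{thm_equal_improv} will close the argument.
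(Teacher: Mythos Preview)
Your proposal is correct and follows essentially the same route as the paper: both arguments identify $\max_i \alpha_i(S)$ with $\Pt(A_S)$ via Theorem \ref{geopont}, observe $\dim A_S \geq n-d$ for one inequality, and for the other construct $S$ from an optimal $A$ by taking the columns of a matrix whose rows span $A^\perp$, then invoke $\ASP(n,d) = \PONT(n,n-d)$. If anything you are more careful than the paper, which does not explicitly check that the resulting columns $s_i$ are nonzero; your observation that $s_i = 0$ would force $e_i \in A$ and hence $\Pt_i(A) = \infty$ fills that small gap.
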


\begin{proof}
By Theorem \ref{geopont}, $\min_{S \in S_{n,d}} \max_{1 \leq i \leq n} \alpha_i(S) \geq \PONT(n, n-d)$. It remains to show that equality holds. Let $A$ be a space such that $\Pt(A) = \PONT(n, n-d)$. Let $W$ be a matrix with at most $d$ rows and with $n$ columns such that $W$'s rows span the orthogonal complement of $A$. Letting $S$ be the set of $W$'s columns suffices. 
\end{proof}
\emph{ }\\ We now summarize our three formulations of $\ASP(n,d)$. If $\epsilon = \ASP(n,d)$, then we must have,
\begin{itemize}
	\item Some matrix $G$ of rank $\leq d$ with $G_{ii} = 1$ and $|G_{ij}| \leq \epsilon$,
	\item Some space $A \subset \R^n$ of rank at least $n-d$ with $\Pt(A) = \epsilon$,
	\item Some set $S = \{s_1, s_2, \dots, s_n\} \subset \R^d \setminus \{0\}$ with $\max_i \alpha_i(S) = \epsilon$. 
\end{itemize}
All three of these ``interpretations" will play a role in proving Theorem \ref{thm_dim_2}.
\section{Proof of Theorem \ref{thm_dim_2}} \label{d2}

We now show that $\ASP(n,2) = \SP(n,2) = \cos \frac{\pi}{n}$ for all $n \geq 2$. Let $\ASP(n,2) = \epsilon$. Since $\ASP(n,2) \leq SP(n,2)$, we know that $\epsilon \leq \cos \frac{\pi}{n} < 1$. Applying our results on $\ASP(n,d)$, we define the following:
\begin{itemize}
	\item Let $Q$ be a matrix of rank $\leq 2$ with $Q_{ii} = 1$ and $|Q_{ij}| \leq \epsilon$. 
	\item Let $A \subset \R^n$ be a subspace with rank at least $n-2$ such that $\Pt(A) = \epsilon$.
	\item Let $S = \{w_1, w_2, \dots w_n \} \subset \R^2 \setminus \{0\}$ be a set of non-zero vectors such that $\alpha_i(S) \leq \epsilon$ for $1 \leq i \leq n$.
\end{itemize}
Based on Theorems \ref{duality} and \ref{geopont}, we make the following additional assumptions and definitions.
\begin{itemize}
	\item For each $i$, let $\epsilon_i$ denote $\Pt_i(P)$. Thus $\epsilon = \max_ i \epsilon_i$.
	\item $A$ is the null space of $Q^t$, and is all the space of all linear dependencies of $w_i$. In particular, $\sum_1^n a_iw_i = 0$ if and only if $(a_1, a_2, \dots, a_n) \in A$. 
	\item $|Q_{ij}| \leq \epsilon_j$ with equality holding for some $i$ for each $j$. 
	\item Let $P$ denote a matrix with column space $A$ (implying $Q^tP = 0$) such that $P_{ii} = 1$, and $\Pt_i(p_i) = \epsilon_i$, where $p_i$ denotes the $i$th column of $P$. 
	\item $\alpha_i(S) = \epsilon_i$. 
	\item Let $\theta_i$ denote the directed angle from the $x$-axis to $w_i$. We assume $0 \leq \theta_1 \leq \theta_2 \leq \dots \leq \theta_n \leq \pi$. In other words, the $w_i$ are ordered by their angles with the $x$-axis, and all have positive $y$-coordinate (this can be guaranteed by noting the symmetry between $w_i$ and $-w_i$). 
\end{itemize}
The main steps of the proof are as follows:

\begin{enumerate}
	\item We utilize $w_1, w_2, \dots, w_n$ to derive several properties about the structure of $P$.
	\item We leverage $Q$'s local optimality to construct a diagonal matrix $\Lambda$ such that $Q^tP\Lambda = P\Lambda Q^t = 0$.
	\item We show that $P\Lambda$ is symmetric and conclude that $\frac{Q + Q^t}{2}$ is a gram matrix of rank $2$.
\end{enumerate}

\begin{lemma}\label{uniquep}
For $1 \leq i \leq n$, $p_i$ is the unique vector up to scale in $A$ such that $\Pt_i(p_i) = \epsilon_i$. Furthermore, there exist real numbers $a_1, b_1, a_2, b_2, \dots a_n, b_n$ with $a_i, b_i < 0$ for $1 \leq i \leq n-1$, and $a_n, b_n > 0$ such that 
\[P = \left( \begin{array}{cccc}
1 & a_1 & & b_n\\
b_1 & \ddots & \ddots & \\
& \ddots & \ddots & a_{n-1} \\
a_n & & b_{n-1} & 1 \end{array} \right),\]
with all other terms in the matrix being 0.
\end{lemma}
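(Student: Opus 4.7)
The plan is to exploit the geometric interpretation from Theorem \ref{geopont}, which identifies $\Pt_i(A)$ with $\alpha_i(S)$. Unpacking the definitions, a vector $p \in A$ with $p_i = 1$ and $\Pt_i(p) = \epsilon_i$ corresponds exactly to an $\ell^1$ minimizer of
$$\min_{c \in \R^{n-1}} \sum_{j \neq i} |c_j| \qquad \text{subject to} \qquad \sum_{j \neq i} c_j w_j = -w_i,$$
with $p_j = c_j$ for $j \neq i$, and with minimum value $1/\epsilon_i$. Because $w_1,\ldots,w_n$ live in $\R^2$, standard LP reasoning says such a minimizer has support of size at most $2$, supported on the endpoints of an edge of $\text{conv}(\pm w_j : j \neq i)$ pierced by the ray from the origin in direction $-w_i$; uniqueness of $p_i$ up to scale in $A$ reduces to uniqueness of this minimizer.

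The bulk of the argument is two geometric facts, both following from the hypothesis $\epsilon < 1$. First, no two $w_j$'s are parallel (positively or negatively): if $w_k = \mu w_j$ with $\mu \neq 0$, the dependence $w_j - \mu^{-1} w_k = 0$ gives a vector in $A$ whose alignments at indices $j$ and $k$ are $|\mu|$ and $|\mu|^{-1}$, forcing $\max(\epsilon_j, \epsilon_k) \geq 1 > \epsilon$. Second, for every $i$, each $w_j$ with $j \neq i$ is an extreme point of $\text{conv}(\pm w_k : k \neq i)$: otherwise $w_j$ is a convex combination of the remaining $\pm w_k$'s with total weight $1$, and after moving the self-term $-w_j$ to the left one obtains a dependence $a \in A$ with $a_i = 0$, $a_j = -1$, and $\sum_{k \neq i,j}|a_k| \leq 1$, again forcing $\epsilon_j \geq 1$.

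Combining these, the extreme vertices of $\text{conv}(\pm w_j : j \neq i)$ are exactly $\pm w_j$ for $j \neq i$, appearing in cyclic angular order as $w_1,\ldots,w_{i-1},w_{i+1},\ldots,w_n,-w_1,\ldots,-w_{i-1},-w_{i+1},\ldots,-w_n$ with strict angular separation. The ray $-w_i$ then lies strictly inside one cone of this polygon, so the minimizer is unique, and a short case analysis identifies the piercing edge: for $i \in \{2,\ldots,n-1\}$ it is the edge between $-w_{i-1}$ and $-w_{i+1}$, so $p_i$ is supported on $\{i-1,i,i+1\}$ with both off-diagonal entries negative; for $i=1$ the wraparound edge from $w_n$ to $-w_2$ gives support $\{1,2,n\}$ with $(p_1)_2 < 0$ and $(p_1)_n > 0$; for $i=n$ the edge from $-w_{n-1}$ to $w_1$ gives support $\{1,n-1,n\}$ with $(p_n)_{n-1} < 0$ and $(p_n)_1 > 0$. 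This matches the stated sparsity pattern and signs of $P$.

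I expect the main obstacle to be the extremality fact, where one has to handle the possibility that the convex combination expressing a non-extreme $w_j$ itself uses $-w_j$, and verify that clearing that self-term preserves $\sum |a_k| \leq 1$. Once both geometric facts are in place, the sparsity and sign conclusions follow from inspection of the angular ordering.
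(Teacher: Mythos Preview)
Your proposal is correct and follows essentially the same approach as the paper. Both arguments use the identification $\Pt_i(A)=\alpha_i(S)$ from Theorem~\ref{geopont}, deduce from $\epsilon<1$ that no two $w_j$ are (anti)parallel and that the $\pm w_j$ are all extreme in $H(S_i)$, and then read off the tridiagonal structure and signs by locating $\pm\epsilon_i w_i$ on the relevant boundary edge of $H(S_i)$; your $\ell^1$-LP phrasing is just another name for this boundary argument, and your explicit treatment of the wraparound cases $i\in\{1,n\}$ is in fact a bit more careful than the paper's cyclic shorthand $\overline{w_{i-1}w_{i+1}}$.
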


\begin{figure}[h]
\centering
\begin{tikzpicture}
\draw (-2.8, 0) -- (2.8,0);
\node at (2.9, 0.1) {$w_1$};
\node at (-2.9, -0.2) {$-w_1$};

\draw (2.5,1) -- (-2.5,-1);
\node at (2.6, 1.1) {$w_2$};
\node at (-2.6, -1.2) {$-w_2$};

\draw (1, 2.35) -- (-1, -2.35);
\node at (1.1, 2.45) {$w_3$};
\node at (-1.1, -2.55) {$-w_3$};

\node at (-0.5, 1) {$\dots$};
\node at (0.5, -1) {$\dots$};

\draw (-2.5, 1.3) -- (2.5, -1.3);
\node at (-2.4, 1.4) {$w_n$};
\node at (2.9, -1.2) {$-w_n$};

\draw [green] (0,0) -- (2.5,0);
\node [green] at (1.6, -0.5) {$\epsilon_1w_1$};

\draw [red] (2.5,1) --(1, 2.35)-- (-2.5, 1.3) --(-2.5,-1)--(-1, -2.35)--(2.5, -1.3)-- (2.5,1);
\node [red] at (-1, 2.15) {$H(S_1)$};
\end{tikzpicture}
\caption{$S$ and $H(S_1)$}\label{fig1}
\end{figure}
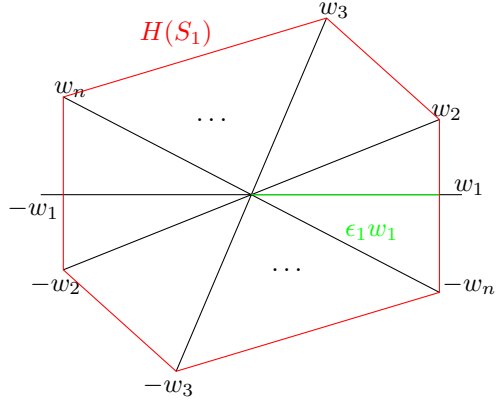

\begin{proof}
We first show that $w_1, w_2, \dots, w_n, -w_1, -w_2, \dots -w_n$ form a convex polygon (in that order). Assume they didn't. Then $w_i \in H(S_i)$ for some $i$ which in turn implies that $\epsilon \geq \epsilon_i \geq 1$ a contradiction. We can similarly show that no two vectors $w_i, w_j$ are parallel. This also implies that $H(S_i)$ is the union of a convex polygon with its interior.

Recall that $p_i$ is the $i$th column of $P$ such that $p_{ii}= 1$ and $\Pt_i(p_i) = \epsilon_i$. Since $\sum a_iw_i = 0$ for all $a = (a_1, a_2, \dots a_n) \in A$, it follows that  $w_i = -\sum_{j \neq i} (p_i)_jw_j.$ Let $\lambda_j = \epsilon_i (p_i)_j$. Then $\sum_{j \neq i} |\lambda_j| = 1$, and $\epsilon_i w_i = \sum_{j \neq i} \lambda_j w_j.$

By the maximality of $\epsilon_i$, $\epsilon_i w_i$ must be on the boundary of $H(S_i)$, and consequently on the line segment $\overline{w_{i-1}w_{i+1}}$ (see Figure \ref{fig1} for an illustration with $i = 1$). Therefore, $|\lambda_{i-1}| + |\lambda_{i+1}| = 1$, and $\lambda_j = 0$ for $j \notin \{i-1, i+1\}$. This means $(p_i)_j = 0$ for $j \notin \{i-1, i, i+1\}$ which implies that $P$ must have the desired form. 

Because $w_{i-1}, w_i$, and $w_{i+1}$ are non-parallel vectors, there is a unique linear combination of $w_{i-1}$ and $w_{i+1}$ that yields $\epsilon_i w_i$. This linear combination corresponds to the unique vector (up to scale) in $P$ with alignment $\epsilon_i$. Because $(p_i)_i = 1$, $p_i$ is uniquely determined. Finally, since $w_i$ is on the line segment $\overline{w_{i-1}w_{i+1}}$, this linear combination must have strictly positive coefficients, and this implies that $a_i, b_i$ have the desired signs. 

\end{proof}

\begin{lemma}\label{uniform}
$\epsilon_1 = \epsilon_2 = \dots = \epsilon_n = \epsilon$.
\end{lemma}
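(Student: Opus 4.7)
My plan is to argue by contradiction: suppose some $\epsilon_j < \epsilon$, and by cyclic relabeling assume $\epsilon_1 < \epsilon$. I will produce a small perturbation of $S=\{w_1,\ldots,w_n\}$ that strictly decreases $\max_i\alpha_i(S)$, contradicting the minimality of $\epsilon=\ASP(n,2)$ established in Theorem \ref{geopontcor}.

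Write $w_i=r_i(\cos\theta_i,\sin\theta_i)$ and set $\gamma_i=\theta_i-\theta_{i-1}$ for $i\ge 2$ and $\gamma_1=\pi+\theta_1-\theta_n$, so that $\sum_j\gamma_j=\pi$. Combining the linear relation $w_i=-a_{i-1}w_{i-1}-b_iw_{i+1}$ from Lemma \ref{uniquep} with Cramer's rule in $\R^2$ yields the explicit formula
\[
\alpha_i \;=\; \frac{r_{i-1}\,r_{i+1}\,\sin(\gamma_i+\gamma_{i+1})}{r_i\,\bigl(r_{i-1}\sin\gamma_i + r_{i+1}\sin\gamma_{i+1}\bigr)},
\]
so that $\alpha_i$ depends only on the $3$-neighborhood $(r_{i-1},r_i,r_{i+1},\gamma_i,\gamma_{i+1})$. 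A direct computation gives the sign pattern $\partial\alpha_i/\partial r_i<0$ and $\partial\alpha_i/\partial r_{i\pm 1}>0$, regardless of the gaps.

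Let $J=\{i:\alpha_i=\epsilon\}$, so $1\notin J$. I look for a perturbation $(\delta\gamma,\delta r)\in\R^{2n}$ with $\sum_j\delta\gamma_j=0$ that strictly reduces $\alpha_i$ for every $i\in J$; by continuity the slack constraints stay below $\epsilon$, yielding a strictly better $S'$ and contradicting optimality. By Farkas's lemma, failure of such a perturbation would produce nonnegative weights $(\lambda_i)_{i\in J}$, not all zero, together with a scalar $\mu$, satisfying the KKT system
\begin{align*}
\lambda_{j-1}\,\partial_{\gamma_j}\alpha_{j-1}+\lambda_j\,\partial_{\gamma_j}\alpha_j &= \mu \quad\text{for each coordinate }\gamma_j,\\
\lambda_{j-1}\,\partial_{r_j}\alpha_{j-1}+\lambda_j\,\partial_{r_j}\alpha_j+\lambda_{j+1}\,\partial_{r_j}\alpha_{j+1} &= 0 \quad\text{for each coordinate }r_j,
\end{align*}
with the convention $\lambda_i=0$ for $i\notin J$ and cyclic indexing.

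The crucial step is the $r_1$-equation: since $1\notin J$ gives $\lambda_1=0$, it reads $\lambda_n\,\partial_{r_1}\alpha_n+\lambda_2\,\partial_{r_1}\alpha_2=0$, and both partials are strictly positive, forcing $\lambda_2=\lambda_n=0$. Feeding $\lambda_2=0$ into the $\gamma_2$-equation yields $\mu=0$; the $\gamma$-system is then homogeneous, and along each cyclic component of $J$ the relation $\lambda_j\,\partial_{\gamma_{j+1}}\alpha_j+\lambda_{j+1}\,\partial_{\gamma_{j+1}}\alpha_{j+1}=0$ propagates $\lambda_{j+1}=0$ from the already-vanishing boundary throughout, giving $\lambda_i=0$ for every $i$, a contradiction with nontriviality. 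The main obstacle is the case-free bookkeeping of this propagation when $J$ consists of several cyclic components each flanked by slack indices, together with the verification that no partial derivative appearing in the recursion vanishes at the given optimal $S$; the latter can be handled by a generic-position perturbation of $S$ if necessary, since the set of $S$ for which a relevant partial vanishes is a proper subvariety.
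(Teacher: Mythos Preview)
Your approach and the paper's are at heart the same idea: radial scalings of the $w_i$. The paper observes directly that replacing $w_i$ by $(1-\delta)w_i$ increases $\epsilon_i$ and decreases $\epsilon_{i\pm1}$ while leaving the rest unchanged; if the tight set $J=\{i:\epsilon_i=\epsilon\}$ is proper, pick $i\notin J$ adjacent to some $j\in J$, shrink $w_i$ slightly, and $j$ leaves $J$ without any index entering it. Finitely many such shrinkages empty $J$, contradicting optimality. Your Farkas/KKT framework is a linearized version of exactly this move, handling all of $J$ at once rather than one neighbor at a time.

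There is, however, a genuine gap in your propagation step. After you obtain $\lambda_2=\lambda_n=0$ from the $r_1$-equation, you switch to the $\gamma$-equations and need $\partial_{\gamma_{j+1}}\alpha_{j+1}\neq 0$ to push the vanishing forward. That partial has no definite sign (in your formula it is a difference of positive terms), and your proposed remedy---perturbing $S$ to generic position---is not available here: $S$ is the \emph{optimal} configuration, and a generic perturbation destroys the very optimality you are exploiting, so you cannot assume you are off a proper subvariety.

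The repair is immediate and in fact simplifies the argument: stay with the $r$-equations, whose sign pattern you have already established. From $\lambda_1=\lambda_2=0$ the $r_2$-equation reads $\lambda_3\,\partial_{r_2}\alpha_3=0$ with $\partial_{r_2}\alpha_3>0$, hence $\lambda_3=0$; inductively the $r_j$-equation forces $\lambda_{j+1}=0$, and you reach $\lambda\equiv 0$ without ever touching the $\gamma$-system. In other words, angle perturbations were never needed---radial perturbations alone suffice, which is precisely the paper's short direct argument.
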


\begin{proof}
As demonstrated in the proof of Lemma \ref{uniquep}, $\epsilon_iw_i$ is the intersection of $\overline{w_{i-1}w_{i+1}}$ with $w_i$. 

Suppose we pick some sufficiently small $\delta > 0$ and replace $w_i$ with $w_i(1-\delta)$. Because $w_1, w_2, \dots w_n$ are the vertices of a convex polygon, this will strictly increase $\epsilon_i$ and strictly decrease $\epsilon_{i+1}$ and $\epsilon_{i-1}$. All other $\epsilon_j$ will be unchanged. Therefore, if there exists $i$ with $\epsilon_i < \epsilon_{i +1}$, we can apply this operation and strictly decrease $\epsilon_{i+1}$. Repeatedly applying this, in any case in which not all $\epsilon_i$ are equal, we can decrease the largest one contradicting the assumption that $\epsilon$ is optimal. Therefore, all $\epsilon_i$ are equal. 
\end{proof} \emph{ } \\
For the next several lemmas, we use the following abuse of notation.
\begin{enumerate}
	\item For any matrix $B$ we will use $\Pt(B)$ to denote $\Pt(C(B))$ where $C(B)$ denotes the column space of $B$.
	\item We similarly use $\Pt_i(B)$ to denote $\Pt_i(B) = \max_{b \in C(B)} \Pt_i(b).$
\end{enumerate}

\begin{lemma}\label{opbounds}
Let $M$ be a matrix. Then for $1 \leq i \leq n$,
\begin{enumerate}
	\item If $\langle q_i, Mp_i \rangle < 0$, then as $x \to 0$, $\Pt_i(P + xMP) \leq \epsilon -\Theta(x)$.
	\item If $\langle q_i, Mp_i \rangle = 0$, then as $x \to 0$, $\Pt_i(P + xMP) \leq \epsilon + o(x)$.
\end{enumerate}
\end{lemma}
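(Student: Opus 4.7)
My plan is to localize the optimization to a neighborhood of $p_i$ using the uniqueness of Lemma \ref{uniquep}, perform a careful first-order expansion, and exploit the orthogonality $q_i \cdot y = 0$ to control the non-smooth contributions. Since $p_i$ is (up to scale) the unique vector in $A$ attaining $\Pt_i = \epsilon$, a standard compactness argument on the unit sphere of $A$ shows that for $|x|$ sufficiently small, the maximizer of $\Pt_i$ over $(I+xM)A = C(P + xMP)$ lies arbitrarily close to $(I+xM)p_i$. Hence I may parametrize the candidate optimizer as $a = p_i + y$ with $y \in A$, $y_i = 0$, and $y \to 0$ as $x \to 0$, and study $v := (I+xM)(p_i + y)$.

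Using the tridiagonal support of $p_i$ from Lemma \ref{uniquep} and the normalization $|(p_i)_{i-1}| + |(p_i)_{i+1}| = 1/\epsilon$, I would write out a first-order expansion of $\Pt_i(v) = |v_i|/\sum_{j \neq i}|v_j|$. With $s_j := \mathrm{sign}((p_i)_j)$ for $j \in \{i-1, i+1\}$ and $J := \{1, \dots, n\} \setminus \{i-1, i, i+1\}$, direct computation yields
\begin{equation*}
\Pt_i(v) = \epsilon + \epsilon x A_i - \epsilon^2 g(y, x) + o(x),
\end{equation*}
where $A_i := (Mp_i)_i - \epsilon[s_{i-1}(Mp_i)_{i-1} + s_{i+1}(Mp_i)_{i+1}]$ and $g(y, x) := s_{i-1}y_{i-1} + s_{i+1}y_{i+1} + \sum_{j \in J}|y_j + x(Mp_i)_j|$. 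The absolute-value terms arise because $(p_i)_j = 0$ for $j \in J$, making $|v_j|$ a non-smooth function of the perturbation at those coordinates.

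Bounding $\Pt_i(P+xMP)$ now reduces to lower-bounding $\min_y g(y, x)$. I would exploit that $y \in A$ forces $q_i \cdot y = 0$, together with the extremal values $(q_i)_{i\pm 1} = -\epsilon s_{i\pm 1}$ (forced by $q_i \cdot p_i = 0$ and the a priori bound $|(q_i)_k| \leq \epsilon$), to derive the identity $\sum_{j \in J}(q_i)_j y_j = \epsilon(s_{i-1}y_{i-1} + s_{i+1}y_{i+1}) = \epsilon S_1$. Applying the dual triangle inequality $\sum_{j \in J}|w_j| \geq (1/\epsilon)|\sum_j (q_i)_j w_j|$ with $w_j = y_j + x(Mp_i)_j$ (valid since $|(q_i)_j|/\epsilon \leq 1$) produces $g(y, x) \geq S_1 + |S_1 + xR/\epsilon|$, where $R := \sum_{j \in J}(q_i)_j(Mp_i)_j$. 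A short case split on the sign of $S_1 + xR/\epsilon$ then yields the uniform bound $g(y, x) \geq -xR/\epsilon$, valid for every admissible $y$.

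Substituting back, the maximum over $y$ satisfies $\Pt_i(P + xMP) \leq \epsilon + \epsilon x(A_i + R) + o(x) = \epsilon + \epsilon x \langle q_i, Mp_i\rangle + o(x)$, since $\langle q_i, Mp_i\rangle = A_i + R$ by direct expansion of the inner product. Both parts of the lemma then follow: in case (1), the strictly negative linear coefficient $\epsilon\langle q_i, Mp_i\rangle$ dominates the $o(x)$ remainder for small $x > 0$, giving $\Pt_i \leq \epsilon - \Theta(x)$; in case (2), the linear term vanishes and only $o(x)$ remains. The main obstacle is the non-smooth $|y_j + x(Mp_i)_j|$ terms for $j \in J$; the key trick is converting the orthogonality $q_i \cdot y = 0$ into a matching dual inequality with weights $(q_i)_j/\epsilon$ that precisely cancels the non-smooth contribution and replaces it with the bilinear form $\langle q_i, Mp_i\rangle$.
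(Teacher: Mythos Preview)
Your argument rests on the same key dual inequality as the paper (exploiting $|(q_i)_j|\le\epsilon$ together with $q_i\perp A$), but there is a gap in justifying the $o(x)$ remainder in your first-order expansion. Set $x=0$: your formula gives $\epsilon - \epsilon^2 g(y,0)$, whereas the exact value is $\Pt_i(p_i+y) = \epsilon/(1+\epsilon g(y,0))$, and these differ by a term of order $g(y,0)^2$. Your compactness localization only yields $y(x)\to 0$ with no rate, so a priori this residual need not be $o(x)$. The expansion does turn out to hold \emph{at the maximizer}, but only because one can show $g(y(x),x)=O(x)$ there --- and establishing that already requires combining your bound $g\ge -xR/\epsilon$ with the trivial lower bound $\Pt_i\big((I+xM)p_i\big)\ge\epsilon-O(x)$ to squeeze the denominator. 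That step is missing from your write-up, and without it the $o(x)$ is unsupported.

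The paper avoids the issue entirely by never Taylor-expanding the quotient. It uses the exact identity
\[
\Pt_i(v)-\epsilon \;=\; \frac{|v_i|-\epsilon\sum_{j\neq i}|v_j|}{\sum_{j\neq i}|v_j|}
\]
and bounds the numerator in one stroke by $|v_i|-\epsilon\sum_{j\neq i}|v_j|\le\langle q_i,v\rangle = x\langle q_i,Mu\rangle$ for $u$ in the column space of $P$. This is precisely your dual inequality applied to \emph{all} off-diagonal coordinates simultaneously rather than just those in $J$, and it yields directly $\Pt_i(v)\le\epsilon + Ax\|u-\hat p_i\|+Bx\langle q_i,Mp_i\rangle$ with no second-order term in $y$ at all. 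A short compactness lemma (Lemma~\ref{topology}) then handles both cases. If you repair your proof along these lines, the separate tridiagonal bookkeeping on $\{i-1,i+1\}$ versus $J$ becomes unnecessary.
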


\begin{proof}
Fix $i$. Let $\hat{P}$ denote the set of all unit vectors $u \in \text{im}(P)$ with $u_i \geq 0$. Because $\Pt_i$ is scale invariant, it follows that $\Pt_i(P) = \Pt_i(\hat{P})$ and $$\Pt_i((I + xM)P) = \max_{u \in \hat{P}} \Pt_i(u + xMu).$$ 
For any $u \in \hat{P}$ we have 
\begin{equation*}
\begin{split}
\Pt_i(u + xMu) &= \frac{|(u + xMu)_i|}{\sum_{j \neq i} |(u + xMu)_j|} \\
&= \epsilon + \frac{|(u + xMu)_i| - \epsilon \sum_{j \neq i} |(u + xMu)_j|}{\sum_{j \neq i} |(u + xMu)_j|}. \\
\end{split}
\end{equation*}
Because $(q_i)_i = 1$, and $|(q_i)_j| \leq \epsilon$ for $j \neq i$, for any $x \in \R^n$ with $x_i \geq 0$, $|x_i| - \epsilon \sum_{j \neq i} |x_j| \leq \langle q_i, x \rangle.$ Furthermore, $q_i$ is orthogonal to all $u \in \hat{P}$. As a result,
\begin{equation*}
\begin{split}
\Pt_i(u + xMu) &\leq \epsilon + \frac{\langle q_i, u + xMu \rangle}{\sum_{j \neq i} |(u + xMu)_j|} \\
&= \epsilon + \frac{\langle q_i, xMu \rangle}{\sum_{j \neq i} |(u + xMu)_j|}.
\end{split}
\end{equation*}

Let $\hat{p_i} = \frac{p_i}{|p_i|}$, which implies $\hat{p_i} \in \hat{P}$. Our strategy is to relate $\Pt_i(u + xMu)$ to the distance from $u$ to $\hat{p_i}$. We have that
\begin{equation*}
\begin{split}
\Pt_i(u + xMu) &\leq \epsilon + \frac{\langle q_i, xMu \rangle}{\sum_{j \neq i} |(u + xMu)_j|} \\
&= \epsilon + \frac{x\langle q_i, M(u - \hat{p_i}) \rangle}{\sum_{j \neq i} |(u + xMu)_j|} + \frac{x\langle q_i, Mp_i \rangle}{\sum_{j \neq i} |(u + xMu)_j|}.
\end{split}
\end{equation*} 
Since $M$ is fixed and all $u \in \hat{P}$ are unit vectors, there exist $A, \geq 0$ and $B >0$ that are dependent on $M, P, Q$ and independent of $x$ such that $$\frac{\langle q_i, M(u - \hat{p_i}) \rangle}{\sum_{j \neq i} |(u + xMu)_j|} \leq A||u - \hat{p_i}||,$$ and $$\frac{\langle q_i, Mp_i \rangle}{\sum_{j \neq i} |(u + xMu)_j|} \leq B \langle q_i, Mp_i \rangle.$$ Therefore,
\begin{equation}\label{ABound}
\Pt_i(u + xMu) \leq \epsilon + Ax||u - \hat{p_i}|| + Bx\langle q_i, Mp_i \rangle.
\end{equation}
We now state a lemma about metric spaces and apply it to our particular case. A proof can be found in the appendix.

\begin{lemma}\label{topology}
Let $U$ be a compact metric space with metric $d$, and $\alpha: U \times \R^+ \to \R$ be a continuous function with the following properties.
\begin{enumerate}
	\item There exists a unique $u_0 \in U$ such that $\alpha(u_0, 0) > \alpha(u, 0)$ for all $u \neq u_0$.
	\item $\alpha(u, x) \leq C + Axd(u, u_0) + Bx$ for constants $A, B, C \in \R$ with $A \geq 0$ and $B \leq 0$.
\end{enumerate}
Then we have the following.
\begin{enumerate}
	\item If $B < 0$ then $\alpha(u, x) \leq C - \Theta(x)$ as $x \to 0$.
	\item If $B = 0$ then $\alpha(u,x) \leq C + o(x)$ as $x \to 0$.
\end{enumerate}
\end{lemma}\emph{ }\\
We now verify that the conditions of Lemma \ref{topology} are met. $\hat{P_i}$ is a compact metric space with standard euclidean distance metric. Taking $\alpha(u, x) = \Pt_i(u + xMu)$, we have the following.
\begin{enumerate}
	\item By Lemma \ref{uniquep}, $\hat{p_i}$ is the unique maximum of $\alpha(u, 0)$.
	\item By Equation \ref{ABound}, we have that $\alpha(u, x) \leq \epsilon + Axd(u, \hat{p_i}) + Bx\langle q_i, Mp_i \rangle$
\end{enumerate}
Since $B \geq 0$, $B \langle q_i, Mp_i \rangle \leq 0$ with equality holding when $\langle q_i, Mp_i \rangle = 0$. This completes all of the conditions, and a direct application of Lemma \ref{topology} finishes the proof. 
\end{proof}

Lemma \ref{opbounds} gives a sufficient condition on $M$ for perturbing by $M$ to allow $\Pt_i(P + xMP) \leq \Pt_i(P)$. Applying it simultaneously for all $i$ and noting that $P$ is a global optimum gives us the following.

\begin{lemma}\label{linprog}
Let $M$ be a matrix such that for all $1 \leq i \leq n$, $\langle q_i, Mp_i \rangle \leq 0$. Then for all $1 \leq i \leq n$, $\langle q_i, Mp_i \rangle = 0$. 
\end{lemma}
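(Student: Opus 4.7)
The plan is to argue by contradiction. Suppose there is a matrix $M$ with $\Phi_i(M) := \langle q_i, Mp_i\rangle \leq 0$ for all $i$ but $\Phi_{i_0}(M) < 0$ for some $i_0$. The aim is to produce a perturbation of $P$ whose alignment drops strictly below $\epsilon$, contradicting the global minimality $\Pt(P) = \epsilon$. Using $M$ itself is insufficient: Lemma \ref{opbounds}(2) only gives $\Pt_i(P + xMP) \leq \epsilon + o(x)$ at indices in the zero set $J := \{i : \Phi_i(M) = 0\}$, and the $o(x)$ term need not be nonpositive, so $\Pt_i$ may marginally exceed $\epsilon$. So the strategy is to construct an auxiliary $M^*$ with $\Phi_i(M^*) < 0$ strictly for every $i$; then Lemma \ref{opbounds}(1) applied uniformly gives $\Pt_i(P + xM^*P) \leq \epsilon - \Theta(x)$ for each $i$, hence $\Pt(P + xM^*P) < \epsilon$ for small $x > 0$, the desired contradiction.

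I would take $M^* = M + \delta N$ for a small $\delta > 0$. The strictly negative coordinates of $\Phi(M)$ remain strictly negative under a small perturbation, so it suffices to find $N$ with $\Phi_j(N) < 0$ for every $j \in J$. By Farkas' lemma applied to the linear map $N \mapsto (\Phi_j(N))_{j \in J}$, such $N$ fails to exist only when there is a nonzero $\lambda \in \R_{\geq 0}^J$ with $\sum_{j \in J} \lambda_j q_j p_j^T = 0$ as a matrix.

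The main step is to rule out such a nonnegative relation. From Lemma \ref{uniquep} each $p_j$ is supported cyclically on $\{j-1, j, j+1\}$ with $(p_j)_j = 1$ and off-diagonals $a_j, b_j$ of prescribed (nonzero) signs, and from Lemma \ref{uniform} $|a_{j-1}| + |b_j| = 1/\epsilon$. Combined with $(q_j)_j = 1$, $|(q_j)_k| \leq \epsilon$, and the relation $Q^tP = 0$, the identity $(q_j)_{j-1}a_{j-1} + (q_j)_{j+1}b_j = -1$ saturates the triangle inequality, forcing $(q_j)_{j-1}a_{j-1} = -\epsilon|a_{j-1}|$ and $(q_j)_{j+1}b_j = -\epsilon|b_j|$. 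Reading off the $(i,i)$-entry of $\sum_j \lambda_j q_j p_j^T$ (with $\lambda$ extended by zero off $J$) then yields the cyclic recurrence $\lambda_i = \epsilon|b_{i-1}|\lambda_{i-1} + \epsilon|a_i|\lambda_{i+1}$. At any $i \notin J$ we have $\lambda_i = 0$, and since the coefficients on the right are strictly positive this forces $\lambda_{i-1} = \lambda_{i+1} = 0$; since $i_0 \notin J$, this initiates a propagation that gives $\lambda \equiv 0$ around the cycle. The main obstacle is the rigidity step—extracting the definite signs of the relevant $(q_j)_k$ entries from the soft bound $|(q_j)_k| \leq \epsilon$, using $Q^tP = 0$ together with the tridiagonal cyclic shape of $P$—after which the cycle propagation is straightforward and the rest of the argument is a direct application of Lemma \ref{opbounds}.
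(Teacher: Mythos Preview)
Your argument is correct and takes a genuinely different route from the paper's. The paper also starts from Lemma~\ref{opbounds}, obtaining $\Pt_1((I+xM)P)\le\epsilon-\Theta(x)$ and $\Pt_i((I+xM)P)\le\epsilon+o(x)$ for $i\ge 2$, but then passes back to the geometric picture: it writes $W(I+xM)^{-1}$ with columns $w_i^x$, observes that for small $x$ these still form a convex polygon so that $\epsilon_i^x w_i^x = w_i^x\cap\overline{w_{i-1}^x w_{i+1}^x}$, and then repeats the scaling trick from Lemma~\ref{uniform}---shrinking $w_1^x$ by a factor $1-c_1x$ raises $\epsilon_1^x$ only by $O(x)$ (still $<\epsilon$) while pushing $\epsilon_2^x$ down by $\Theta(x)$, and iterating this around the cycle eventually makes every $\epsilon_i^x<\epsilon$. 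Your approach instead stays entirely on the matrix side: rather than massaging the perturbed configuration, you upgrade $M$ to an $M^*$ with $\langle q_i,M^*p_i\rangle<0$ for every $i$ via Gordan's alternative, and rule out the obstructing positive combination $\sum_{j\in J}\lambda_j q_jp_j^T=0$ by reading the $(i,i)$-entries. The rigidity step you isolate---that $Q^tP=0$ together with $|a_{j-1}|+|b_j|=1/\epsilon$ forces $(q_j)_{j\pm1}$ to saturate at $\pm\epsilon$---is exactly what the paper exploits later in Lemma~\ref{symmetric}, so you are effectively borrowing that observation one lemma early. Your version is cleaner and avoids the somewhat delicate $o(x)$ versus $\Theta(x)$ bookkeeping in the scaling argument; the paper's version has the virtue of being more self-contained at this point, using only the convex-polygon picture already set up in Lemmas~\ref{uniquep} and~\ref{uniform}.
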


\begin{proof}

We assume towards a contradiction, that for some $i$, $\langle q_i, Mp_i \rangle <0$. Without loss of generality, $i=1$. By Lemma \ref{opbounds}, as $x \to 0$,
\begin{equation} \label{epsilonboundsLemma48}
\begin{split}
\Pt_1((I + xM)P) &\leq \epsilon -\Theta(x), \\
\Pt_2((I + xM)P) &\leq \epsilon + o(x), \\
\dots \\
\Pt_n((I + xM)P) &\leq \epsilon + o(x).
\end{split}
\end{equation}
Our strategy is to ``massage" the matrix $(I + xM)P$ into a matrix that has alignment strictly lower than $\epsilon$. 

Let $W = [w_1, w_2, \dots, w_n]$ be the matrix with columns $w_i$. For sufficiently small $x$, $(I + xM)$ is invertible. Let $w_i^x$ be the $i$th column of $W(1 + xM)^{-1}$.  Since $WP = 0$, we also have that 
\begin{equation*}
\begin{split}
0 &= WP \\
&= W(1 + xM)^{-1}(1 + xM)P.
\end{split}
\end{equation*}
It follows that the row space and column space of $W(1 + xM)^{-1}$ and $(1 + xM)P$ are orthogonal conjugates. Let $S^x = \{w_1^x, w_2^x, \dots, w_n^x\}$, and let $\epsilon_i^x = \Pt_i((1 + xM)P)$. By Theorem \ref{geopont}, $\alpha_i(S^x) = \epsilon_i^x$. 

If $x$ is sufficiently small, then $w_i^x$ is very close to $w_i$ for all $i$. As a result, $w_1^x, w_2^x, \dots, w_n^x, -w_1^x, \dots -w_n^x$  form a  convex polygon. Therefore, by the same argument given in Lemma \ref{uniquep}, $\epsilon_i^xw_i^x = w_i^x \cap \overline{w_{i-1}^xw_{i+1}^x}.$

\begin{figure}[t]
\centering
\begin{tikzpicture}
\node at (0, -0.3) {$O$};

\draw (-2.8, 0) -- (2.8,0);
\node at (3.2, 0.3) {$A$};
\node at (-2.9, -0.2) {$-w_1^x$};

\draw (2.5,1) -- (-2.5,-1);
\node at (2.9, 1.3) {$B$};
\node at (-2.6, -1.2) {$-w_2^x$};

\draw (1, 2.35) -- (-1, -2.35);
\node at (1.4, 2.7) {$C$};
\node at (-1.1, -2.55) {$-w_3^x$};

\node at (2.3, 1.3) {$D$};
\node at (1.7, 0.5) {$D'$};
\draw (1, 2.35) -- (2.4, 0);
\node at (2.4, -0.3) {$A'$};

\draw (2.8,0) -- (1, 2.35);

\node at (-0.5, 1) {$\dots$};
\node at (0.5, -1) {$\dots$};

\draw (-2.5, 1.3) -- (2.5, -1.3);
\node at (-2.4, 1.4) {$w_n^x$};
\node at (2.9, -1.2) {$-w_n^x$};

\end{tikzpicture}
\caption{Diagram for scaling $w_1^x$}\label{fig2}
\end{figure}
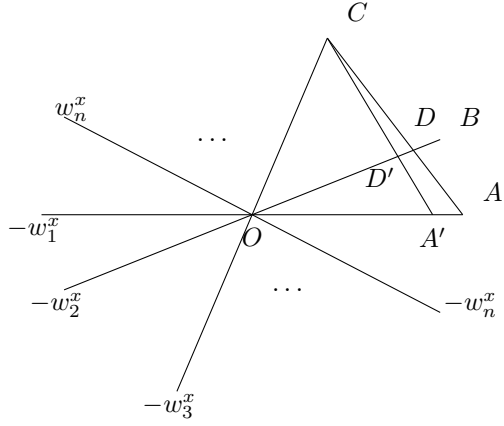

We now mimic the proof of Lemma \ref{uniform}. The key idea is to scale the lengths of $w_i^x$ so that the values of $\epsilon_i^x$ come closer together with the goal of making them all strictly smaller than $\epsilon$. This will contradict the optimality of $\epsilon$, finishing the proof. 

Fix $c_1 > 0$, and suppose we ``scale" $w_1^x$ to $w_1^x(1 - c_1x)$. Our goal is to find the effect that this has on $\epsilon_1^x, \epsilon_2^x, \dots, \epsilon_n^x$. To avoid confusion, we let $\gamma_i^x$ denote the new value of $\epsilon_i^x$ after scaling. 

Because $\epsilon_i^xw_i^x$ is the intersection of $\overline{w_{i-1}^xw_{i+1}^x}$ and $w_i^x$, $\gamma_i^x = \epsilon_i^x$ for $i \notin \{1, 2, n\}$. By a direct computation, $\gamma_1^x = \frac{\epsilon_1^x}{1 - c_1x}$. Provided $c_1$ is sufficiently small and applying equation \ref{epsilonboundsLemma48}, we have

\begin{equation*}
\begin{split}
\gamma_1^x &=\frac{\epsilon_1^x}{1 - c_1x} \\
 &= \epsilon_1^x(1 + c_1x + o(x)) \\
&\leq (\epsilon - \Theta(x))(1 + 2c_1x) \\
& < \epsilon.
\end{split}
\end{equation*}
 
Next, we bound $\gamma_2^x$. Refer to Figure \ref{fig2} for the following. Let $O$ be the origin, let $A, B, C$ denote $w_1^x, w_2^x, w_3^x$ respectively, and let $D = \overline{AC} \cap \overline{OB}$. Then, let $A' = w_1^x(1 - c_1x)$ and $D' = \overline{A'C} \cap \overline{OB}$. It follows that $$\frac{OD}{OB} = \epsilon_2^x, \text{ and }\frac{OD'}{OB} = \gamma_2^x.$$

Therefore, $\gamma_2^x = \frac{OD'}{OD}\epsilon_2^x.$ Because $x$ ranges over a sufficiently small neighborhood, there exists a constant $c_2$, determined only by $\{w_i: 1 \leq i \leq n\}$, such that $DD' \geq c_2AA'$. By the definition of $A'$, $\frac{OA'}{OA} = 1 - c_1x$, which means $AA' = |w_1^x|c_1x$. As a result,

\begin{equation*}
\begin{split}
\gamma_2^x &= \epsilon_2^x\frac{OD'}{OD}\\
&= \epsilon_2^x(1 - \frac{DD'}{|w_2^x|}) \\
&\leq \epsilon_2^x(1 - \frac{c_2AA'}{|w_2^x|}) \\
& = \epsilon_2^x(1 - \frac{c_1c_2x|w_1^x|}{|w_2^x|}).
\end{split}
\end{equation*}

However, by Equation \ref{epsilonboundsLemma48}, $\epsilon_2^x \leq \epsilon + o(x)$. Therefore, we have that 
\begin{equation*}
\begin{split}
\gamma_2^x &\leq \epsilon_2^x(1 - \frac{c_1c_2x|w_1^x|}{|w_2^x|}) \\
&\leq (\epsilon + o(x))(1 - \Theta(x)) \\
&\leq \epsilon - \Theta(x),
\end{split}
\end{equation*}
as $x$ goes to $0$. 

It is possible to apply the same argument for $\gamma_n^x$, but this won't be necessary, so we simply bound $\gamma_n^x \leq \epsilon_n^x$. In summary, this gives us the following upper bounds on $\gamma_i^x$. 
\begin{equation*}
\begin{split}
\gamma_1^x &< \epsilon,  \\
\gamma_2^x &\leq \epsilon - \Theta(x),\\
\gamma_3^x &= \epsilon_3^x \leq \epsilon + o(x), \\
&\dots \\
\gamma_n^x &< \epsilon_n^x \leq \epsilon + o(x).
\end{split}
\end{equation*}

We can repeat this process by scaling $w_2^x$ by $1 - c_3x$ for some sufficiently small constant $c_3$. If $x$ is sufficiently small, the bounding procedure we used for $\gamma_i^x$ will similarly work, as $c_3x$ still asymptotically dominates $o(x)$ by definition. Doing this results in $\gamma_2^x < \epsilon$ and $\gamma_3^x \leq \epsilon - \Theta(x)$. Repeating this procedure $n$ times yields yields a configuration in which all $\epsilon_i^x$ are strictly less than $\epsilon$, which contradicts the optimality of $\epsilon$. Therefore our initial assumption was false, and the claim must hold.

\end{proof}

\begin{lemma} \label{Lambda}
There exists diagonal matrix $\Lambda$ with positive diagonal elements such that $$P\Lambda Q^t = Q^tP\Lambda = 0.$$
\end{lemma}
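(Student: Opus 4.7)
The strategy is to apply a theorem of alternatives to the first-order optimality condition supplied by Lemma \ref{linprog}. Writing $\langle q_i, Mp_i\rangle = \langle q_ip_i^t,\,M\rangle_F$ in the Frobenius inner product, Lemma \ref{linprog} becomes the assertion that no $M\in\R^{n\times n}$ satisfies $\langle q_ip_i^t,M\rangle_F\le 0$ for every $i$ with strict inequality for some $i$. By Stiemke's lemma (the theorem of alternatives characterising existence of \emph{strictly} positive solutions to $Ax=0$), this is equivalent to the existence of reals $\lambda_1,\ldots,\lambda_n>0$ with $\sum_i\lambda_i\,q_ip_i^t = 0$. Take $\Lambda = \mathrm{diag}(\lambda_1,\ldots,\lambda_n)$; an entry-wise check shows $\sum_i\lambda_i\,q_ip_i^t = Q^t\Lambda P^t$, so $Q^t\Lambda P^t = 0$, i.e.\ $P\Lambda Q = 0$ after transposing.

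It remains to upgrade $P\Lambda Q=0$ to the target $P\Lambda Q^t=0$, for which it suffices to show $\mathrm{col}(Q)=\mathrm{col}(Q^t)$. We already have $A = \mathrm{col}(P) = \mathrm{null}(Q^t)$ by setup, and the orthogonality $q_i\perp A$ used in the proof of Lemma \ref{opbounds} gives $Qa = 0$ for every $a\in A$, i.e.\ $A\subseteq\mathrm{null}(Q)$. A short argument rules out $\mathrm{rank}(Q)<2$: if $Q = uv^t$ then $u_iv_i=1$ and $|u_iv_j|\le\epsilon$ force $1=(u_iv_i)(u_jv_j)=(u_iv_j)(u_jv_i)\le\epsilon^2$, contradicting $\epsilon<1$. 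Thus $\mathrm{rank}(Q)=2$ and both null spaces have dimension $n-2$, so $\mathrm{null}(Q)=\mathrm{null}(Q^t)=A$ and $\mathrm{col}(Q)=A^\perp=\mathrm{col}(Q^t)$. Since $P\Lambda Q=0$ says $P\Lambda$ annihilates $\mathrm{col}(Q)$, it annihilates $\mathrm{col}(Q^t)$ too, and $P\Lambda Q^t=0$ follows. The companion identity $Q^tP\Lambda = 0$ is immediate since $Q^tP = 0$ by construction.

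The main obstacle is extracting \emph{strict} positivity of every $\lambda_i$. Lemma \ref{linprog} only forbids a single perturbation from strictly improving the alignment at the optimum $P$, which via Gordan's lemma would merely produce a nonnegative combination $\sum_i\mu_i q_ip_i^t=0$ with \emph{some} $\mu_i>0$; the statement of Lemma \ref{Lambda} asks that \emph{every} diagonal entry of $\Lambda$ be positive, and this is precisely the strengthening captured by Stiemke's form of the alternative. Once this is in hand, the bookkeeping passing from $P\Lambda Q=0$ to $P\Lambda Q^t=0$ is comparatively painless, resting only on the observation that under rank-$2$ duality between $A$ and its orthogonal complement, $Q$ and $Q^t$ share a null space.
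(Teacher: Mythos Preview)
Your core approach---recasting Lemma \ref{linprog} as the negation of one alternative in Stiemke's theorem and concluding $\sum_i\lambda_i q_ip_i^t=0$ with all $\lambda_i>0$---matches the paper's exactly (the paper packages Stiemke as its Lemma \ref{farkas}). The problem is in the bookkeeping afterward.

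In the paper's setup $q_i$ denotes the $i$th \emph{column} of $Q$: this is what makes $q_i\perp A$ follow from $Q^tP=0$, since $(Q^tP)_{ij}=\sum_k Q_{ki}P_{kj}$ says precisely that column $i$ of $Q$ is orthogonal to column $j$ of $P$. With $p_i$ also the $i$th column of $P$, the entry-wise check gives
\[
\Bigl(\sum_i\lambda_i\,q_ip_i^t\Bigr)_{jk}=\sum_i\lambda_i\,Q_{ji}P_{ki}=(Q\Lambda P^t)_{jk},
\]
so $\sum_i\lambda_i q_ip_i^t=Q\Lambda P^t$, not $Q^t\Lambda P^t$. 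Transposing $Q\Lambda P^t=0$ yields $P\Lambda Q^t=0$ directly, and together with $Q^tP\Lambda=0$ (immediate from $Q^tP=0$) the lemma is proved. Your entire second paragraph is therefore unnecessary.

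Worse, the detour has its own gap: from $q_i\perp A$ you deduce $Qa=0$ for $a\in A$, but with $q_i$ the $i$th column this says $\sum_j Q_{ji}a_j=0$, i.e., $(Q^ta)_i=0$---which is just the standing hypothesis $A=\mathrm{null}(Q^t)$ again, not $A\subseteq\mathrm{null}(Q)$. So the claimed equality $\mathrm{null}(Q)=\mathrm{null}(Q^t)$ is unproved. Fortunately, once the algebra slip above is corrected, none of this is needed, and your proof collapses to the paper's.
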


\begin{proof}
Since $Q^tP = 0$, it suffices to show $P\Lambda Q^t = 0$. By Lemma \ref{linprog}, for any matrix $M$ such that 
\begin{equation*}
\begin{split}
\langle q_1, Mp_1 \rangle &\leq 0, \\
\langle q_2, Mp_2 \rangle &\leq 0,\\ 
\dots \\
\langle q_n, Mp_n \rangle &\leq 0,
\end{split}
\end{equation*}
$\langle q_i, Mp_i \rangle = 0$ for all $i$. Switching inner products, we have $\langle q_i, Mp_i \rangle = \langle q_ip_i^t, M \rangle$ where $\langle A, B \rangle$ is defined as $\text{Tr} A^tB$ for matrices $A,B$. We now apply the following variant of Farkas Lemma (proved in the appendix). 
\begin{lemma} \label{farkas}
Let $a_1, a_2, \dots, a_n \in \R^k$ be vectors such that for all $i$, there does not exist any $v_i \in \R^k$ with $\langle v_i, a_i \rangle < 0$ and $\langle v_i, a_j \rangle \leq 0$ for $j \neq i$.  Then there exist positive reals $\lambda_1, \lambda_2, \dots, \lambda_n$ such that $\sum \lambda_i a_i = 0$. 
\end{lemma}

Using this, there exist $\lambda_1, \lambda_2, \dots, \lambda_n > 0$ such that $\sum \lambda_i q_ip_i^t = 0.$ Letting $\Lambda$ have diagonal entries $\lambda_1, \lambda_2, \dots, \lambda_n$ implies $P\Lambda Q^t = 0$.
\end{proof}

We will continue to refer to the diagonal matrix found in Lemma \ref{Lambda} as $\Lambda$. 

\begin{lemma} \label{symmetric}
$P\Lambda$ is symmetric.
\end{lemma}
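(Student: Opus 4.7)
The plan is to exploit the cyclic tridiagonal sparsity of $P$ from Lemma \ref{uniquep}, combined with the two annihilation relations $Q^t P \Lambda = P \Lambda Q^t = 0$, to show that both the columns of $P$ and the rows of $P\Lambda$ give three-term linear dependencies among the $w_i$, and then to use the two-dimensionality of $\R^2$ to force the desired proportionality. First I would observe that, since $P \Lambda Q^t = 0$, row $i$ of $P\Lambda$ is orthogonal to every row of $Q$. Each row of $Q$ lies in $A^\perp$ by the construction in Theorem \ref{duality}, and because $Q$ has rank $2$ --- forced by the fact that the pairwise non-parallel vectors $w_i$ genuinely span $\R^2$, so $A$ has codimension exactly $2$ --- the rows of $Q$ span all of $A^\perp$. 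Hence every row of $P\Lambda$ lies in $A$.

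Next I would translate this back to dependencies among the $w_i$. For $2 \leq i \leq n-1$, row $i$ of $P\Lambda$ has nonzero entries only at columns $i-1, i, i+1$, with values $b_{i-1}\lambda_{i-1},\, \lambda_i,\, a_i\lambda_{i+1}$; membership in $A$ (which by Theorem \ref{geopont} consists exactly of the linear dependencies of the $w_i$) means
$$b_{i-1}\lambda_{i-1}\,w_{i-1} + \lambda_i\,w_i + a_i\lambda_{i+1}\,w_{i+1} = 0.$$
On the other hand, column $i$ of $P$ itself lies in $A$ and yields $a_{i-1}w_{i-1} + w_i + b_i w_{i+1} = 0$. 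Since $w_{i-1}, w_i, w_{i+1} \in \R^2$ are pairwise non-parallel (again from the proof of Lemma \ref{uniquep}), their space of linear dependencies is one-dimensional, so the coefficient triples $(b_{i-1}\lambda_{i-1},\,\lambda_i,\,a_i\lambda_{i+1})$ and $(a_{i-1},\,1,\,b_i)$ are proportional, with ratio necessarily $\lambda_i$. Matching the first and third coordinates yields $b_{i-1}\lambda_{i-1} = a_{i-1}\lambda_i$ and $b_i\lambda_i = a_i\lambda_{i+1}$. Running the same argument for $i=1$ and $i=n$, where the relevant rows/columns of $P$ involve the wraparound entries $a_n$ and $b_n$ and the three-vector dependencies involve $\{w_n, w_1, w_2\}$ and $\{w_{n-1}, w_n, w_1\}$ respectively, gives the remaining relation $a_n\lambda_1 = b_n\lambda_n$.

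Finally, symmetry of $P\Lambda$ is equivalent to the equalities $(P\Lambda)_{i,i+1} = (P\Lambda)_{i+1,i}$ for $1 \leq i \leq n-1$ together with $(P\Lambda)_{1,n} = (P\Lambda)_{n,1}$; expanding these in terms of $P$'s entries gives precisely $a_i\lambda_{i+1} = b_i\lambda_i$ and $b_n\lambda_n = a_n\lambda_1$, which are exactly the identities derived above. The main thing that requires care is the cyclic bookkeeping at $i = 1$ and $i = n$, and the sanity check that rank$(Q) = 2$ so that rows of $Q$ really span all of $A^\perp$; aside from this the argument reduces to a matching of one-dimensional null spaces in $\R^2$.
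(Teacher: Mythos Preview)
There is a genuine gap. You assert that ``each row of $Q$ lies in $A^\perp$,'' but in the paper's setup $A$ is the null space of $Q^t$, so it is the \emph{columns} of $Q$ that span $A^\perp$; equivalently, in Theorem~\ref{duality} it is the rows of $G$ that lie in $A^\perp$, and here $Q=G^t$. The relation $P\Lambda Q^t=0$ says the rows of $P\Lambda$ are orthogonal to the \emph{row} space of $Q$, hence lie in $\ker Q$, not in $\ker Q^t=A$. Since $Q$ is not known to be symmetric (nor even to satisfy $\ker Q=\ker Q^t$), you cannot conclude that the rows of $P\Lambda$ lie in $A$. In fact that conclusion is essentially equivalent to the lemma itself: once you know row $i$ of $P\Lambda$ lies in $A$, your one-dimensional dependency argument immediately gives $a_i\lambda_{i+1}=b_i\lambda_i$, which is exactly $P\Lambda$ being symmetric. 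So the step you treat as a preliminary observation is the whole content of the lemma.

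The paper's proof sidesteps this by not trying to place the rows of $P\Lambda$ in $A$ directly. Instead it first uses $Q^tP=0$ together with $\Pt_i(p_i)=\epsilon$ to pin down the off-diagonal entries of $Q$ adjacent to the support of $P$ (they must equal $\pm\epsilon$ with determined signs). Then the two equations $Q^tP\Lambda=0$ and $P\Lambda Q^t=0$ become explicit scalar identities $\lambda_i=\epsilon(|c_i|+|d_{i-1}|)=\epsilon(|d_i|+|c_{i-1}|)$, giving $|c_i|-|d_i|$ constant in $i$. A separate geometric product identity coming from the column relations $c_{i-1}w_{i-1}+d_iw_{i+1}=-\lambda_i w_i$ yields $\prod_i|c_i|=\prod_i|d_i|$, and these two together force $|c_i|=|d_i|$, hence $c_i=d_i$. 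Your uniqueness-of-three-term-dependencies idea is clean, but to make it work you would first need an independent argument that $\ker Q=A$ (equivalently that the row and column spaces of $Q$ coincide), and nothing established so far gives that.
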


\begin{proof}
Right multiplication by $\Lambda$ scales the columns of $P$. By Lemma 13, $P\Lambda$ has form 
\[P\Lambda = \left( \begin{array}{cccc}
\lambda_1 & c_1 & & d_n\\
d_1 & \ddots & \ddots & \\
& \ddots & \ddots & c_{n-1} \\
c_n & & d_{n-1} & \lambda_n \end{array} \right) ,\]
for $c_i , d_i < 0$ for $1 \leq i \leq (n-1)$ and $c_n, d_n > 0$. Each column of $P$ has alignment $\epsilon$. Since $Q^tP = 0$ and $|Q_{ij}| \leq \epsilon$ for $i \neq j$,
\begin{enumerate}
	\item $Q_{ij} = \epsilon$ if $P_{ij} < 0$, $i \neq j$,
	\item $Q_{ij} = -\epsilon$ if $P_{ij} > 0$, $i \neq j$,
\end{enumerate}
By Lemma \ref{Lambda}, $Q^tP\Lambda = P\Lambda Q^t = 0$. It follows that,
\begin{equation*}
\begin{split}
\lambda_1 &= \epsilon(|c_1| + |d_n|) = \epsilon(|d_1| + |c_n|),\\
\lambda_2 &= \epsilon(|c_2| + |d_1|) = \epsilon(|d_2| + |c_1|),\\
\ldots \\
\lambda_n &= \epsilon(|c_n| + |d_{n-1}|) = \epsilon(|d_n| + |c_{n-1}|).\\
\end{split}
\end{equation*}
By rearranging the equations, we see that $$|c_1| - |d_1| = |c_2| - |d_2| = \dots = |c_n| - |d_n|.$$ Recall that $A$ is the space of linear dependencies of $S = \{w_1, w_2, \dots, w_n\}$. Therefore, $\sum (p_i)_j w_j = 0$. Substituting $c_i, d_i$ and multiplying by $\lambda_i$, we see that $$c_{i-1}w_{i-1} + d_iw_{i+1} = \lambda_iw_i.$$ Let $\phi_i$ denote the angle between $w_i$ and $w_{i+1}$. It follows that $$\frac{|d_i|}{|c_{i-1}|} = \frac{|w_{i-1}|\sin \phi_{i-1}}{|w_{i+1}|\sin \phi_{i}}.$$ Multiplying these inequalities over all $i$ implies $$\prod |d_i| = \prod |c_i|.$$ Because $|c_i| - |d_i|$ is the same for all $i$, this implies that $|c_i| = |d_i|$ for all $i$. Since $c_i, d_i$ have the same sign, we have that $c_i = d_i$ which implies that $P\Lambda$ is symmetric, as desired. 
\end{proof}

\begin{lemma} \label{rank}
$P\Lambda$ has rank $\geq n-2$
\end{lemma}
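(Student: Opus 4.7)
This lemma should follow almost immediately from facts already in hand, so my plan is a short dimension-counting argument rather than any new construction. The two ingredients are already established: (i) by Lemma \ref{Lambda}, $\Lambda$ is a diagonal matrix with \emph{strictly positive} diagonal entries, and (ii) the matrix $P$ was defined at the start of Section \ref{d2} so that its column space equals $A$.

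First I would observe that since the diagonal entries of $\Lambda$ are strictly positive, $\Lambda$ is invertible. Right multiplication by an invertible matrix preserves rank, so
\[
\text{rank}(P\Lambda) = \text{rank}(P).
\]
Hence it suffices to show $\text{rank}(P) \geq n-2$.

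Second, I would recall that $A \subset \R^n$ was chosen to be a subspace of dimension at least $n-2$ (we are working in the $d=2$ setting of Theorem \ref{thm_dim_2}, so the applicable rank in the duality/alignment correspondence of Theorems \ref{duality} and \ref{geopont} is $n - d = n - 2$). Equivalently, $A = \ker(Q^t)$ with $\text{rank}(Q) \leq 2$, so $\dim A \geq n-2$. Since the column space of $P$ is exactly $A$, we have
\[
\text{rank}(P) = \dim A \geq n-2,
\]
and combining with the first step gives $\text{rank}(P\Lambda) \geq n-2$, as desired.

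I do not expect any real obstacle here; the substantive work was already carried out in constructing $P$ with column space $A$ and in producing $\Lambda$ with positive diagonal entries. The role of this lemma is presumably to feed into the final step of Theorem \ref{thm_dim_2}, where one combines it with the symmetry of $P\Lambda$ from Lemma \ref{symmetric} to conclude that (some rescaling of) $\frac{Q+Q^t}{2}$ is a Gram matrix of the right rank.
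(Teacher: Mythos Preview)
Your argument is formally valid if one takes the phrase ``$P$ denote a matrix with column space $A$'' in the setup of Section~\ref{d2} at face value, and it is certainly more streamlined than the paper's proof. However, there is a circularity concern worth flagging. In the setup, each column $p_i$ is \emph{constructed} as the (unique, by Lemma~\ref{uniquep}) element of $A$ with $(p_i)_i=1$ and $\Pt_i(p_i)=\epsilon_i$; all that is established a priori is $\mathrm{col}(P)\subseteq A$ (which is what the parenthetical ``implying $Q^tP=0$'' actually uses). The equality $\mathrm{col}(P)=A$, i.e.\ $\mathrm{rank}(P)\ge n-2$, is precisely the content of the present lemma once you strip off the invertible factor $\Lambda$. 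So invoking ``$\mathrm{col}(P)=A$'' to deduce $\mathrm{rank}(P)\ge n-2$ is arguably assuming what you want to prove; the sentence in the setup is better read as a (slightly loose) declaration of intent rather than an independently verified fact.

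The paper sidesteps this by working directly with the explicit tridiagonal shape of $P\Lambda$ obtained in Lemma~\ref{uniquep}: deleting the first column and last row leaves an $(n-1)\times(n-1)$ lower triangular matrix with nonzero diagonal entries $c_1,\dots,c_{n-1}$, hence rank $n-1$, so $\mathrm{rank}(P\Lambda)\ge n-2$. Your approach has the virtue of being conceptual and coordinate-free; the paper's has the virtue of being self-contained and not relying on the possibly unjustified equality $\mathrm{col}(P)=A$. If you want to keep your argument, you should either justify $\mathrm{col}(P)=A$ independently (for instance by the same submatrix observation) or else note that it follows once Lemma~\ref{rank} is proved by other means.
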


\begin{proof}
 If we delete the first column and the last row, we get a matrix that is lower diagonal with no zeroes on the diagonal (since $c_i \neq 0$ for all $i$). Thus the rank is at least $n-2$.
\end{proof}\emph{ }\\
Let $P\Lambda = S$. We make the following observations about $S$.
\begin{enumerate}
	\item By Lemma \ref{Lambda}, $Q^tS = SQ^t = 0$. Taking transposes, we have $QS = Q^tS = 0$.
	\item By Lemma \ref{symmetric} $S$ is symmetric.
	\item By Lemma \ref{rank}, $S$ has rank is at least $n-2$.
\end{enumerate}

Let $Q' = \frac{Q + Q^t}{2}$. Then $Q'S = 0$, which means that $Q'$ has rank at most $2$. Furthermore, all off-diagonal elements of $Q'$ are at most $\epsilon$ by absolute value. All that remains is to show that $Q'$ is positive semidefinite. 

$Q'$ has rank at most $2$, and therefore has at most two non-zero eigenvalues, and at least $n-2$ eigenvalues of $0$. Since $\epsilon \leq 1$, each eigenvalue of $Q'$ is at most $n$. Since $\text{tr}(Q') = n$, both non-zero eigenvalues of $Q'$ must be non-negative, which makes $Q'$ positive semidefinite.

This implies that $\SP(n,2) \leq \epsilon$, which gives $\SP(n, 2) = \ASP(n, 2) = \epsilon$. By citing that $\SP(n,2) = \cos \frac{\pi}{n}$ (\cite{conway} for example), we are done.

\bibliographystyle{ieeetr}
\bibliography{references}

\appendix

\section{Proofs of Lemmas}

\begin{proof}
(Lemma \ref{holder}) Let $\mathbb{L}_1(w) = \sum_1^m |w_i|$ for any $w \in \R^m$. Then $\epsilon \mathbb{L}_1$ is sub-linear on $\R^m$. Because $\epsilon\mathbb{L}_1$ dominates $f$, the Hahn Banach Theorem implies there exists $g: \R^m \to R$ such that $g(v) = f(v)$ for all $v \in V$, and $g$ is dominated by $\epsilon \mathbb{L}_1$. $g$ is a linear map, so there exists a unique $u \in \R^m$ such that $g(v) = \langle u, v \rangle$. This suffices. If $u_i > \epsilon$, then we have $g(e_i) = \langle u_i, e_i \rangle > \epsilon \mathbb{L}_1(e_i)$, a contradiction. 
\end{proof}

\begin{proof}

(Lemma \ref{topology}) We first show the following lemma that we use in both cases.

\begin{lemma}\label{lem_append}
Let $M \subset U$ be a compact subspace such that $u_0 \notin M$. Then there exists $\delta > 0$ and $D < C$  such that for all $x < \delta$ and $m \in M$, $\alpha(m, x) < D$.
\end{lemma}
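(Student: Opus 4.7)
The plan is a standard compactness-plus-continuity argument. The hypothesis $u_0 \notin M$ and the fact that $u_0$ is the unique maximizer of $\alpha(\cdot, 0)$ let us create a strict gap between $\sup_{m \in M} \alpha(m, 0)$ and $C$, and then uniform continuity in the $x$ variable lets us preserve most of that gap for all sufficiently small $x > 0$.

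First I would note that the continuous function $m \mapsto \alpha(m, 0)$ attains its maximum on the compact set $M$ at some point $m^\star \in M$; call this maximum $D_0$. By hypothesis (1) of Lemma~\ref{topology} we have $\alpha(m^\star, 0) < \alpha(u_0, 0)$, and setting $x = 0$ in hypothesis (2) gives $\alpha(u_0, 0) \leq C$. Chaining these two inequalities yields the crucial strict inequality $D_0 < C$, which is where the assumption $u_0 \notin M$ gets used.

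Next, fix any $D$ strictly between $D_0$ and $C$, say $D := (D_0 + C)/2$, so that in particular $D - D_0 > 0$. I would then invoke uniform continuity of $\alpha$ on the compact product $M \times [0, 1]$ (using $[0,1]$ just to have a compact slab in the $x$ variable) to find $\delta \in (0, 1)$ such that $|\alpha(m, x) - \alpha(m, 0)| < D - D_0$ whenever $m \in M$ and $0 \leq x < \delta$. For any such $(m, x)$, the bound gives
\[
\alpha(m, x) < \alpha(m, 0) + (D - D_0) \leq D_0 + (D - D_0) = D,
\]
which is precisely the desired conclusion.

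I do not expect a real obstacle here; the only subtlety is the chain $D_0 < \alpha(u_0, 0) \leq C$, which is exactly the place where the uniqueness clause in hypothesis (1) together with the exclusion $u_0 \notin M$ is needed to promote the non-strict bound $\alpha(u, 0) \leq C$ to a strict one on $M$. Everything else is routine compactness and uniform continuity, so no further ingredients should be required.
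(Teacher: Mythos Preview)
Your proof is correct and follows essentially the same route as the paper: both establish $D_0 = \max_{m \in M}\alpha(m,0) < C$ via compactness and the uniqueness of $u_0$, set $D = (D_0+C)/2$, and then use continuity in $x$ to preserve the gap for small $x$. The only cosmetic difference is that the paper packages the continuity step by noting that $f(x) := \max_{m \in M}\alpha(m,x)$ is continuous, whereas you invoke uniform continuity on the compact product $M \times [0,1]$; your argument is also slightly more explicit in using hypothesis~(2) at $x=0$ to get $\alpha(u_0,0) \leq C$.
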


\begin{proof}
Let $C' = \max_{u \in M} \alpha(u, 0)$. Then $C' < C$ because $u_0$ is the unique optimum of $\alpha(u,0)$ over $U$.

Let $f(x) = \max_{m \in M} \alpha(m, x).$ Then $f(0) = C'$.  Because $M$ is compact, $f$ is a continuous function. Let $D = \frac{C + C'}{2}$. Then there exists $\delta > 0$ such that $f(x) < C''$ for $x < \delta$. Therefore, for all $m \in M$ and $x < \delta$, $\alpha(m,x) \leq f(x) < D$ as desired. 
\end{proof}

We split into two cases corresponding to the statement of the theorem.\\
\textbf{Case 1: $B < 0$.}

Pick some small open neighborhood $N$ of $u_0$ such that for all $u \in N$, $Ad(u, u_0) < \frac{1}{2}|B|$. Then for all $u \in N$, $\alpha(u, x) \leq C + \frac{1}{2}Bx = C - \Theta(x)$. Therefore, we need only show that the same holds for $u \notin N$.

Let $M = U - N$. By Lemma \ref{lem_append}, there exists $\delta > 0$, and $D < C$ such that $\alpha(m,x) < D$ for $x < \delta$. Therefore $\alpha(m,x) < C - (C-D) < C - cx$ for a sufficiently small constant $c$. The result follows. \\
\textbf{Case 2: $B = 0$.}

Let $N_x \subset U$ denote all $u$ such that $\alpha(u, x) > C$. Define $r_x = \sup_{u \in N_x} d(u_0, u).$ Therefore, for all $u \in U$, $\alpha(u,x) \leq C + Axr_x.$ It suffices to show that as $x \to 0$, $r_x \to 0$. 

Let $M_\epsilon = \{u, d(u, u_o) \geq \epsilon\}$. $M_\epsilon$ is compact and doesn't contain $u_0$. By Lemma \ref{lem_append}, there exists $\delta_\epsilon > 0$ such that for all $0 < x < \delta_\epsilon$ and $m \in M_\epsilon$, $\alpha(m, x) < C$. It follows that $r_x \leq \epsilon$ for all $x < \delta$. Since $\epsilon$ was arbitrary,  $r_x \to 0$ as $x \to 0$ and we are done.
\end{proof}

\begin{proof}
(Lemma \ref{farkas}) Let $A$ be the matrix with columns $a_1, a_2, \dots a_n$. For all $a_i$, there does not exist any $v$ with $A^tv \leq 0$ and $a_i^tv < 0$. Therefore, by Farkas lemma, there exists $x_i \geq 0$ such that $Ax_i = -a_i$. Letting $x_i' = x_i + e_i$ where $e_i$ is the vector with $i$th coordinate $1$ and all other coordinates $0$, we see that $\lambda = \sum_1^n x_i'$ satisfies $A\lambda = 0$. Furthermore, all coordinates of $\lambda$ are strictly positive (in fact at least $1$) which completes the proof. 
\end{proof}

\end{document}